\numberwithin{equation}{section}
\newcommand{\1}{\mathbf {1}}
\newcommand{\0}{\mathbf {0}}
\newcommand{\Z}{{\mathbb Z}}
\newcommand{\Q}{{\mathbb Q}}
\newcommand{\C}{{\mathbb C}}
\newcommand{\CC}{{\mathcal C}}
\newcommand{\bsa}{\boldsymbol{a}}
\newcommand{\al}{\alpha}
\newcommand{\om}{\omega}
\newcommand{\la}{\langle}
\newcommand{\ra}{\rangle}
\newcommand{\mfsl}{\mathfrak{sl}}
\newcommand{\whmfsl}{\widehat{\mathfrak{sl}}}
\newcommand{\hsl}{\whmfsl}
\newcommand{\orbit}{\mathscr{O}}
\DeclareMathOperator{\Aut}{Aut}
\DeclareMathOperator{\Hom}{Hom}
\DeclareMathOperator{\Irr}{Irr}
\DeclareMathOperator{\wt}{wt}
\newcommand{\abs}[1]{\lvert{#1}\rvert}
\newcommand{\SC}[1]{\Irr(#1)_{\mathrm{sc}}}
\newtheorem{theorem}{Theorem}[section]
\newtheorem{proposition}[theorem]{Proposition}
\newtheorem{lemma}[theorem]{Lemma}
\newtheorem{corollary}[theorem]{Corollary}
\newtheorem{remark}[theorem]{Remark}
\begin{document}

\title[$\Z_{2k}$-code VOAs]
{$\Z_{2k}$-code vertex operator algebras}

\author[H. Yamada]{Hiromichi Yamada}
\address{Department of Mathematics, Hitotsubashi University, Kunitachi,
Tokyo 186-8601, Japan}
\email{yamada.h@r.hit-u.ac.jp}

\author[H. Yamauchi]{Hiroshi Yamauchi}
\address{Department of Mathematics, Tokyo Woman's Christian University, 
Suginami, Tokyo 167-8585, Japan}
\email{yamauchi@lab.twcu.ac.jp}

\keywords{vertex operator algebra, simple current, code}

\subjclass[2010]{Primary 17B69; Secondary 17B67}

\begin{abstract}
We study a simple, self-dual, rational, and $C_2$-cofinite vertex operator algebra of 
CFT-type whose simple current modules are graded by $\Z_{2k}$. 
Based on those simple current modules, a vertex operator algebra 
associated with a $\Z_{2k}$-code is constructed.  
The classification of irreducible modules for such a vertex operator algebra is established. 
Furthermore, all the irreducible modules are realized 
in a module for a certain lattice vertex operator algebra. 
\end{abstract}

\maketitle

\section{Introduction}\label{sec:intro}

Let $V$ be a simple, self-dual, rational, and $C_2$-cofinite vertex operator algebra 
of CFT-type. Then the set $\SC{V}$ of equivalence classes of simple current 
$V$-modules is closed under the fusion product $\boxtimes_V$, 
and it is graded by a finite abelian group, say $C$. 
That is, $\SC{V} = \{ A^\alpha \mid \alpha \in C \}$ with $A^\alpha$, $\alpha \in C$, 
being inequivalent to each other, $A^0 = V$, 
and $A^\alpha \boxtimes_V A^\beta = A^{\alpha+\beta}$ for $\alpha$, $\beta \in C$. 
If $D$ is a subgroup of $C$ such that the conformal weight $h(A^\alpha)$ of 
$A^\alpha$ is an integer for $\alpha \in D$, then 
the direct sum $\bigoplus_{\alpha \in D} A^\alpha$ has either
a simple vertex operator algebra structure 
or a simple vertex operator superalgebra structure, 
which extends the $V$-module structure \cite[Theorem 3.12]{CKL2015}.   
In such a case $\bigoplus_{\alpha \in D} A^\alpha$ is called 
a simple current extension of $V$. 

The theory of simple current extensions of vertex operator algebras 
has been developed extensively, see for example 
\cite{Carnahan2014, CKL2015, CKM2017, DLM1996a, vEMS2017, Hoehn2003, HKL2015, KO2002}.  
Nowadays it is not hard to construct new vertex operator (super)algebras 
as simple current extensions of known ones. 
One of the examples is the $\Z_k$-code vertex operator algebra \cite{AYY2019}, 
which is a $D$-graded simple current extension 
of the tensor product $K(\mathfrak{sl}_2,k)^{\otimes \ell}$ of $\ell$ copies 
of the parafermion vertex operator algebra $K(\mathfrak{sl}_2,k)$ associated with 
$\mathfrak{sl}_2$ and an integer $k \ge 2$, and $D$ is an additive subgroup of 
$(\Z_k)^\ell$ satisfying a certain condition. 
 
In this paper, we study a $D$-graded simple current extension $U_D$ 
of the tensor product $(U^0)^{\otimes \ell}$ of $\ell$ copies of a vertex operator algebra 
$U^0$ such that $\SC{U^0}$ is graded by $\Z_{2k}$ for an integer $k \ge 2$. 
Here $D$ is an additive subgroup of $(\Z_{2k})^\ell$. 

Let $N = \sqrt{2}A_{k-1}$ be $\sqrt{2}$ times an $A_{k-1}$ root lattice. 
Then a vertex operator algebra $V_N$ associated with the lattice $N$ contains 
a subalgebra isomorphic to 
\[
   L(c_1,0) \otimes \cdots \otimes L(c_{k-1}, 0) \otimes K(\mathfrak{sl}_2,k),
\]
where $L(c_m,0)$, $1 \le m \le k-1$, are the Virasoro vertex operator algebras of 
discrete series.  
The vertex operator algebra $U^0$ is defined to be the commutant of 
$S = L(c_1,0) \otimes \cdots \otimes L(c_{k-2}, 0)$ in $V_N$. 
The vertex operator algebra $U^0$ was previously known 
\cite{Adamovic2007, CKM2017}. 
There are two descriptions of $U^0$, 
one is a $\Z_{k-1}$-graded simple current extension of 
$K(\mathfrak{sl}_2,k-1) \otimes V_{\Z d}$ with $\langle d,d \rangle = 2(k-1)k$, 
and the other is 
a non-simple current extension of $L(c_{k-1},0) \otimes K(\mathfrak{sl}_2,k)$ 
(Theorem \ref{thm:U0}). 
We review the irreducible $U^0$-modules (Theorem \ref{thm:irr_U0-mod}) 
and fusion rules (Theorem \ref{thm:fusion_prod_U0}) in our notation. 
Moreover, we show how the irreducible $U^0$-modules appear 
in $V_{N^\circ}$ (Lemma \ref{lem:Uil_in_VNia}). 

The $\Z_{2k}$-grading of the set $\SC{U^0} = \{ U^l \mid 0 \le l < 2k \}$ 
of equivalence classes of simple current $U^0$-modules 
corresponds to the $\Z_{2k}$-part of the discriminant group 
$N^\circ/N \cong (\Z_2)^{k-2} \times \Z_{2k}$ of the lattice $N$ (Eq. \eqref{eq:Ul_in_Nl}). 
We also recall that the $\Z_k$-grading of $\SC{K(\mathfrak{sl}_2,k)}$ 
used in \cite{AYY2019} for $\Z_k$-code vertex operator algebras 
corresponds to the $\Z_k$-part of $(\Z_2)^{k-2} \times \Z_{2k}$. 

Once the necessary properties of the vertex operator algebra $U^0$ are obtained, 
the construction of the vertex operator algebra $U_D$ is straightforward. 
In fact, $U_D$ is defined to be the commutant of $S^{\otimes \ell}$ 
in a vertex operator algebra $V_{\Gamma_D}$ associated with a certain positive definite 
integral lattice $\Gamma_D$ (Theorem \ref{thm:U_D}). 
It turns out that $U_D$ is a direct sum of a $D$-graded set of simple current 
$(U^0)^{\otimes \ell}$-modules 
$U_\xi = U^{\xi_1} \otimes \cdots \otimes U^{\xi_\ell}$, $\xi = (\xi_1, \ldots, \xi_\ell) \in D$. 
We construct all the irreducible $\chi$-twisted $U_D$-modules for 
$\chi \in \Hom(D, \C^\times)$ in $V_{(N^\circ)^\ell}$, and classify them 
(Theorems \ref{thm:contain_irred} and \ref{thm:irred_U_D-mod}). 
The arguments concerning $U_D$ and its irreducible $\chi$-twisted modules 
are similar to those in Sections 8 and 9 of \cite{AYY2019}. 

If $k = 2$, then $U^0$ is isomorphic to a rank one lattice vertex operator algebra $V_{2\Z}$. 
In the case $k = 4$, the $\Z_8$-code vertex operator algebra $U_D$ 
was studied in \cite{KLY2001}. 
Our result is a generalization of \cite{KLY2001} to an arbitrary $k \ge 2$. 

This paper is organized as follows. 
In Section \ref{sec:preliminaries}, we recall basic properties of 
the Virasoro vertex operator algebra of discrete series and 
the parafermion vertex operator algebra associated with 
$\mathfrak{sl}_2$ and a positive integer $k$.
In Section \ref{sec:VOA_U0}, we define the vertex operator algebra $U^0$ 
and describe it in two ways. We classify irreducible $U^0$-modules as well. 
Fusion rules for irreducible $U^0$-modules are discussed in Section \ref{sec:fusion_rule_U0}.
In Section \ref{sec:UD}, we introduce the positive definite integral lattice 
$\Gamma_D$ and the vertex operator algebra or a vertex operator superalgebra 
$U_D$ for a $\Z_{2k}$-code $D$. 
Finally, in Section \ref{sec:rep_of_U_D}, we classify the irreducible $U_D$-modules.
The weight and the dimension of the top level of the simple current 
$U^0$-module $U^l$, $0 \le l < 2k$, are calculated in Appendix A. 

We use the symbol $\boxtimes_V$ to denote the fusion product over 
a vertex operator algebra $V$.  
We also use the symbol $\otimes$ to denote the tensor product of vertex operator algebras 
and their modules as in \cite{FHL1993}.

\noindent
\textbf{Acknowledgments.}
We would like to thank Ching Hung Lam for valuable discussions.  
Part of this work was done while the first author was staying at 
Institute of Mathematics, Academia Sinica, Taiwan as a visiting scholar from 
May 1, 2017 through April 30, 2018. 
The second author was partially supported by JSPS KAKENHI grant No.19K03409.

\section{Preliminaries}\label{sec:preliminaries}

In this section, we recall the vertex operator algebra associated with 
the discrete series of Virasoro algebra and 
the parafermion vertex operator algebra associated with 
$\mathfrak{sl}_2$ and a positive integer $k$.

\subsection{Virasoro vertex operator algebra $L(c_m,0)$}\label{subsec:Virasoro_VOA}

Let $L(c,0)$ be the simple Virasoro vertex operator algebra of central charge $c$, 
and let $L(c,h)$ be its irreducible highest weight module with highest weight $h$. 
Let 
\begin{equation*}
  c_m = 1 - \frac{6}{(m+2)(m+3)}
\end{equation*}
for $m = 1,2, \ldots$, and
\begin{equation*}
  h^{(m)}_{r,s} = \frac{\big( r(m+3) - s(m+2) \big)^2 - 1}{4(m+2)(m+3)}
\end{equation*}
for $1 \le r \le m+1$, $1 \le s \le m+2$. 
The vertex operator algebra 
$L(c_m,0)$ is self-dual, rational, $C_2$-cofinite, and of CFT-type with 
$L(c_m, h^{(m)}_{r, s})$, $1 \le s \le r \le m+1$,  
a complete set of representatives 
of equivalence classes of irreducible $L(c_m,0)$-modules \cite[Theorem 4.2]{Wang1993}. 
The fusion product of irreducible $L(c_m,0)$-modules is as follows 
\cite[Theorem 4.3]{Wang1993}.
\begin{equation}\label{eq:Vir_fusion}
  L(c_m, h^{(m)}_{r_1, s_1}) \boxtimes_{L(c_m,0)} L(c_m, h^{(m)}_{r_2, s_2}) 
  = \sum_{i, j} L(c_m, h^{(m)}_{|r_1-r_2|+2i-1, |s_1-s_2|+2j-1}),
\end{equation}
where the summation is taken over the integers $i$ and $j$ satisfying
\begin{gather*}
  1 \le i \le \min\{r_1, r_2, m+2-r_1, m+2-r_2\},\\
  1 \le j \le \min\{s_1, s_2, m+3-s_1, m+3-s_2\}.
\end{gather*}

\subsection{Parafermion vertex operator algebra $K(\mathfrak{sl}_2,k)$}
\label{subsec:paraf_VOA}

We fix the notation for the parafermion vertex operator algebra $K(\mathfrak{sl}_2,k)$ 
associated with $\mathfrak{sl}_2$ and a positive integer $k$. 
Details about $K(\mathfrak{sl}_2,k)$ can be found in 
\cite{ALY2014, ALY2019, DLWY2010, DLY2009, DW2016}.  
If $k=1$, then $K(\mathfrak{sl}_2,k)$ reduces to  
the trivial vertex operator algebra $\C\1$. 
So we assume that $k \ge 2$. 
Let 
\begin{equation}\label{eq:lattice_L}
  L^{(k)} = \Z\alpha_1 + \cdots + \Z\alpha_k
\end{equation}
with $\la \alpha_i, \alpha_j \ra = 2 \delta_{i, j}$, and set 
$\gamma_k = \alpha_1 + \cdots + \alpha_k$. 

The vertex operator algebra $V_{L^{(k)}}$ contains a subalgebra isomorphic to 
the simple affine vertex operator algebra $L_{\whmfsl_2}(k,0)$ associated with 
the affine Kac-Moody algebra $\whmfsl_2$ at level $k$, 
and $K(\mathfrak{sl}_2,k)$ is realized as 
the commutant of the vertex operator algebra $V_{\Z \gamma_k}$ in $L_{\whmfsl_2}(k,0)$.  
Let 
\begin{equation*}
  M_{(k)}^j 
  = \{ v \in L_{\hsl_2}(k,0) \mid \gamma_k(n) v = -2j \delta_{n,0} v \text{ for } n \ge 0 \}
\end{equation*}
for $0 \le j < k$. 
Then $M_{(k)}^0 = K(\mathfrak{sl}_2,k)$, and
\begin{equation}\label{eq:Lsl2k_dec}
  L_{\whmfsl_2}(k,0) 
  = \bigoplus_{j = 0}^{k-1} M_{(k)}^j \otimes V_{\Z \gamma_k - j \gamma_k/k}.
\end{equation}

An irreducible $L_{\whmfsl_2}(k,0)$-module $L_{\whmfsl_2}(k,i)$ with $i+1$ dimensional top level 
can be constructed in the $V_{L^{(k)}}$-module $V_{(L^{(k)})^\circ}$ for $0 \le i \le k$, 
where $(L^{(k)})^\circ = \frac{1}{2}L^{(k)}$ is the dual lattice of $L^{(k)}$. 
Let
\begin{equation*}
  M_{(k)}^{i,j} 
  = \{ v \in L_{\whmfsl_2}(k,i) \mid \gamma_k(n) v = (i-2j) \delta_{n,0} v \text{ for } n \ge 0 \}
\end{equation*}
for $0 \le j < k$. Then $M_{(k)}^{0,j} = M_{(k)}^j$, and 
\begin{equation}\label{eq:Lsl2ki_dec}
  L_{\whmfsl_2}(k,i) 
  = \bigoplus_{j = 0}^{k-1} M_{(k)}^{i,j} \otimes V_{\Z \gamma_k + (i-2j) \gamma_k/2k}. 
\end{equation}

The index $j$ of $M_{(k)}^j$ and $M_{(k)}^{i,j}$ can be considered to be modulo $k$. 
We will use the following properties of $M_{(k)}^0$ and $M_{(k)}^{i,j}$. 

(1) $M_{(k)}^0 = K(\mathfrak{sl}_2,k)$ is a simple, self-dual, rational, and
$C_2$-cofinite vertex operator algebra of CFT-type with central charge 
$2(k-1)/(k+2)$.

(2) $M_{(k)}^{i,j}$, $0 \le i \le k$, $0 \le j < k$, are irreducible $M_{(k)}^0$-modules with
\begin{equation}\label{eq:isom_Mij}
  M_{(k)}^{i,j} \cong M_{(k)}^{k-i, j-i}, 
\end{equation}
and $M_{(k)}^{i,j}$, $0 \le j < i \le k$, form a complete set of 
representatives of equivalence classes of irreducible $M_{(k)}^0$-modules.

(3) The top level of $M_{(k)}^{i,j}$ is one dimensional and its conformal weight is 
\begin{equation}\label{eq:top-wt-Mkij}
  h(M_{(k)}^{i,j}) = \frac{1}{2k(k+2)}\Big( k(i-2j) - (i-2j)^2 + 2k(i-j+1)j \Big)
\end{equation}
for $0 \le j \le i \le k$. 
Eq. \eqref{eq:top-wt-Mkij} is valid even in the case $j = i$. 

(4) The fusion product of irreducible $M_{(k)}^0$-modules is
\begin{equation}\label{eq:paraf_fusion}
  M_{(k)}^{i_1,j_1} \boxtimes_{M_{(k)}^0} M_{(k)}^{i_2,j_2} 
  = \sum_{r \in R(i_1,i_2)} M_{(k)}^{r, (2j_1 - i_1 + 2j_2 - i_2 + r)/2}, 
\end{equation}
where $R(i_1,i_2)$ is the set of integers $r$ satisfying
\begin{equation*}
  |i_1-i_2| \le r \le \min \{i_1+i_2, 2k - i_1 - i_2\}, \quad i_1+i_2+r \in 2\Z.
\end{equation*} 
In particular, $M_{(k)}^{j}$, $0 \le j < k$, are the simple currents, and
\begin{equation*}
  M_{(k)}^{p} \boxtimes_{M_{(k)}^0} M_{(k)}^{i,j} = M_{(k)}^{i,j+p}.
\end{equation*}

(5) If $k \ge 3$, then the automorphism group $\Aut M_{(k)}^0$ of $M_{(k)}^0$ is 
generated by an involution $\theta$, 
and $M_{(k)}^{i,j} \circ \theta \cong M_{(k)}^{i,i-j}$. 
The automorphism $\theta$ is induced from the $-1$-isometry $\alpha \mapsto -\alpha$ 
of the lattice $L^{(k)}$.

\section{Vertex operator algebra $U^0$}\label{sec:VOA_U0}

In this section, we discuss a vertex operator algebra $U^0$. 
The vertex operator algebra $U^0$ and its irreducible modules have already been studied    
\cite{Adamovic2007}, \cite[Section 4.4.2]{CKM2017}, see also \cite[Section 3.1]{YY2018}. 
We describe $U^0$ and all the irreducible $U^0$-modules 
in a lattice vertex operator algebra and in its module for later use. 

We fix an integer $k \ge 2$. 
Let $\alpha_i$, $1 \le i \le k$, and $L^{(k)}$ be as in Section \ref{subsec:paraf_VOA}. 
Thus $\la \alpha_i, \alpha_j \ra = 2 \delta_{i, j}$ and 
$L^{(k)} = \Z\alpha_1 + \cdots + \Z\alpha_k$. 
Let 
\begin{equation}\label{eq:gammas}
  \gamma_{k-1} = \al_1 + \cdots + \al_{k-1}, 
  \quad \gamma_k = \al_1 + \cdots + \al_k, 
  \quad d = \gamma_{k-1} - (k-1)\al_k.
\end{equation}
Then 
$d = \gamma_k - k\al_k$, 
$\la \gamma_k, \gamma_k \ra = 2k$, 
$\la d, d \ra = 2(k-1)k$, 
and $\la \gamma_k,d \ra = 0$.

For $\bsa = \bsa(k) = (a_1, \ldots,a_k) \in \{ 0,1 \}^k$, 
let $\delta_{\bsa(k)} = \frac{1}{2} \sum_{p=1}^k a_p \alpha_p$. 
The vertex operator algebra $V_{L^{(k)}}$ associated with the lattice $L^{(k)}$ 
contains a subalgebra isomorphic to
\[
  L(c_1,0) \otimes \cdots \otimes L(c_{k-1}, 0) \otimes L_{\whmfsl_2}(k,0), 
\]
and
\begin{equation}\label{eq:V_Lk_delta-dec}
  V_{L^{(k)} +  \delta_{\bsa(k)}} = 
  \bigoplus_{\substack{0 \le i_s \le s\\i_s \equiv b_s \pmod{2}\\1 \le s \le k}}
  L(c_1, h^{(1)}_{i_1+1, i_2+1}) \otimes \cdots \otimes L(c_{k-1},h^{(k-1)}_{i_{k-1}+1, i_{k}+1}) 
  \otimes L_{\widehat{\mfsl}_2}(k,i_{k})
\end{equation}
as an $L(c_1,0) \otimes \cdots \otimes L(c_{k-1}, 0) \otimes L_{\widehat{\mfsl}_2}(k,0)$-module, 
where $b_s = \sum_{p=1}^s a_p$ 
\cite{KR2013, LLY2003, Wakimoto2001}, 
see also \cite[Section 5]{AYY2019}.
Since $L^{(k-1)} \oplus \Z\al_{k} = L^{(k)}$,  
it follows that
\begin{equation}\label{eq:dec_V_Lk_coset_1}
  V_{L^{(k-1)} +  \delta_{\bsa(k-1)}} \otimes V_{\Z \alpha_k + a_k \alpha_k/2} 
  = V_{L^{(k)} +  \delta_{\bsa(k)}}.
\end{equation}

Let $\omega^s$ be the conformal vector of the Virasoro vertex operator algebra 
$L(c_s,0)$, $1 \le s \le k-1$. 
We apply \eqref{eq:V_Lk_delta-dec} to $V_{L^{(k-1)} +  \delta_{\bsa(k-1)}}$ 
for $k-1$ in place of $k$. 
Then
\begin{equation*}
\begin{split}
  &\{ v \in V_{L^{(k-1)} +  \delta_{\bsa(k-1)}} \otimes V_{\Z \alpha_k + a_k \alpha_k/2} \mid 
  \omega_{(1)}^s v = h^{(s)}_{i_{s}+1, i_{s+1}+1} v, 1 \le s \le k-2 \}\\
  & \qquad  = L_{\widehat{\mfsl}_2}(k-1,i_{k-1}) \otimes V_{\Z \alpha_k + a_k \alpha_k/2}.
\end{split}
\end{equation*}

We also have
\begin{equation}\label{eq:VLa_dec-1}
\begin{split}
  &\{ v \in V_{L^{(k)} +  \delta_{\bsa(k)}} \mid  
  \omega_{(1)}^s v = h^{(s)}_{i_{s}+1, i_{s+1}+1} v, 1 \le s \le k-2 \}\\
  & \qquad  = \bigoplus_{\substack{0 \le i_k \le k\\i_k \equiv b_k \pmod{2}}}
  L(c_{k-1},h^{(k-1)}_{i_{k-1}+1, i_{k}+1}) \otimes L_{\widehat{\mfsl}_2}(k,i_{k})
\end{split}
\end{equation}
by \eqref{eq:V_Lk_delta-dec}. 
Hence \eqref{eq:dec_V_Lk_coset_1} implies that
\begin{equation}\label{eq:mult-eq}
  L_{\widehat{\mfsl}_2}(k-1,i_{k-1}) \otimes V_{\Z \alpha_k + a_k \alpha_k/2} 
  = \bigoplus_{\substack{0 \le i_k \le k\\i_k \equiv b_k \pmod{2}}}
  L(c_{k-1},h^{(k-1)}_{i_{k-1}+1, i_{k}+1}) \otimes L_{\widehat{\mfsl}_2}(k,i_{k})
\end{equation}
for $0 \le i_{k-1} \le k-1$ with $i_{k-1} \equiv b_{k-1} \pmod{2}$. 

We set $i = i_{k-1}$ for simplicity of notation.   
Since $\la \gamma_{k-1}, \al_k \ra = 0$, 
the left hand side of \eqref{eq:mult-eq} is 
\begin{equation}\label{eq-2}
  \bigoplus_{j=0}^{k-2} 
  M_{(k-1)}^{i,j} \otimes 
  V_{\Z \gamma_{k-1} + (i-2j) \gamma_{k-1}/2(k-1) + \Z \al_k + a_k \al_k/2}
\end{equation}
by \eqref{eq:Lsl2ki_dec} for $k-1$ in place of $k$.
Since $\gamma_{k-1} = \gamma_k - \al_k$, 
and since $\alpha_k = (\gamma_k - d)/k$,  
we have a coset decomposition
\begin{equation}\label{eq:ZcZd_coset_dec_ZaZb}
  \Z \gamma_{k-1} + \Z \al_k 
  = \bigcup_{p = 0}^{k-1} 
  \Big(  \Z d + \Z \gamma_k + \frac{p}{k}(\gamma_k - d) \Big)
\end{equation}
with $[ \Z \gamma_{k-1} + \Z \al_k : \Z d + \Z \gamma_k ] = k$, and
\begin{equation}\label{coset_gen-1}
\begin{split}
  & \Z \gamma_{k-1} + \frac{i-2j}{2(k-1)} \gamma_{k-1} + \Z \al_k + \frac{1}{2} a_k \al_k\\
  & \quad = \bigcup_{p = 0}^{k-1}  
  \Big( \big( \Z d + \frac{1}{2(k-1)k} (- (k-1)(a_k + 2p) + i - 2j) d \big)\\
  & \hspace{60mm} 
  + \big(\Z \gamma_k + \frac{1}{2k} (a_k + 2p+ i - 2j) \gamma_k \big) \Big)
\end{split}
\end{equation}
for $0 \le j < k-1$.
The left hand side of \eqref{coset_gen-1} is 
a coset of $\Z \gamma_{k-1} + \Z \al_k$,  
and it is determined by $j$ modulo $k-1$, 
whereas the right hand side is a union of cosets of 
$\Z d + \Z \gamma_k$, and 
$j$ can not be considered to be modulo $k-1$ for these cosets.

For $0 \le p < k$, define $0 \le l < 2k$ by 
\begin{equation}\label{eq:def_l}
  l \equiv i + a_k + 2(p-j) \pmod{2k}.
\end{equation}
Then \eqref{coset_gen-1} can be written as
\begin{equation}\label{coset_gen-2}
\begin{split}
  &\Z \gamma_{k-1} + \frac{i-2j}{2(k-1)} \gamma_{k-1} + \Z \al_k + \frac{1}{2} a_k \al_k\\
  &\qquad  = \bigcup_{\substack{0 \le l < 2k\\ l \equiv i + a_k \pmod{2}}}
  \Big( \big( \Z d - \frac{l}{2k} d + \frac{i - 2j}{2(k-1)} d \big)
  + \big( \Z \gamma_k + \frac{l}{2k} \gamma_k \big) \Big).
\end{split}
\end{equation}
Since $\la d, \gamma_k \ra = 0$, 
it follows from \eqref{eq-2} and \eqref{coset_gen-2} that 
\begin{equation}\label{eq:Lki_Vbvep_dec_2nd_form}
\begin{split}
  &L_{\whmfsl_2}(k-1,i) \otimes V_{\Z \al_k + a_k \al_k/2}\\
  &\quad = \bigoplus_{j = 0}^{k-2} 
  \bigoplus_{\substack{0 \le l < 2k\\ l \equiv i + a_k \pmod{2}}}
  M_{(k-1)}^{i,j}  
  \otimes  V_{\Z d - l d/2k + (i - 2j) d/2(k-1)}
  \otimes V_{\Z \gamma_k + l \gamma_k/2k}.
\end{split}
\end{equation}

Let
\begin{equation*}
  U^{i,l} = \{ v \in L_{\whmfsl_2}(k-1,i) \otimes V_{\Z \al_k + a_k \al_k/2} 
  \mid \gamma_k(n)v = l \delta_{n,0}v \text{ for } n \ge 0\}
\end{equation*}
for $0 \le i \le k-1$, $0 \le l < 2k$, 
which is the multiplicity of $V_{\Z \gamma_k + l \gamma_k/2k}$ 
in \eqref{eq:Lki_Vbvep_dec_2nd_form}. 
Then
\begin{equation*}
  U^{i,l} = 
  \bigoplus_{j=0}^{k-2} M_{(k-1)}^{i,j} \otimes V_{\Z d - l d/2k + (i - 2j) d/2(k-1)}.
\end{equation*}
The index $l$ of $U^{i,l}$ can be considered to be modulo $2k$. 

The right hand side of \eqref{eq:mult-eq} is 
\begin{equation}\label{eq:rhs-mult-eq}
  \bigoplus_{\substack{0 \le i_k \le k\\i_k \equiv b_k \pmod{2}}} \bigoplus_{p = 0}^{k-1} 
  L(c_{k-1},h^{(k-1)}_{i_{k-1}+1, i_{k}+1}) 
  \otimes M_{(k)}^{i_k,p} 
  \otimes V_{\Z \gamma_k + (i_k-2p) \gamma_k/2k}
\end{equation}
by \eqref{eq:Lsl2ki_dec}.  
Recall that $l$ is defined in \eqref{eq:def_l} and that $i = i_{k-1}$ satisfies the condition 
$i \equiv b_{k-1} \pmod{2}$. 
Since $b_k = b_{k-1} + a_k$, we have $i_k - l \in 2\Z$ for $i_k$ such that 
$i_k \equiv b_k \pmod{2}$. 
Then $V_{\Z \gamma_k + (i_k-2p) \gamma_k/2k}$ agrees with 
$V_{\Z \gamma_k + l \gamma_k/2k}$ 
if and only if $p \equiv (i_k - l)/2 \pmod{k}$. 
Hence the multiplicity of $V_{\Z \gamma_k + l \gamma_k/2k}$ in \eqref{eq:rhs-mult-eq} is
\begin{equation}\label{eq:def_Uil-bis}
\begin{split}
  &\{ v \in \bigoplus_{\substack{0 \le i_k \le k\\i_k \equiv b_k \pmod{2}}}
  L(c_{k-1},h^{(k-1)}_{i+1, i_{k}+1}) \otimes L_{\widehat{\mfsl}_2}(k,i_{k})
  \mid \gamma_k(n)v = l \delta_{n,0}v \text{ for } n \ge 0\}\\
  &\qquad = \bigoplus_{\substack{0 \le i_k \le k\\i_k \equiv b_k \pmod{2}}} 
  L(c_{k-1},h^{(k-1)}_{i+1, i_{k}+1}) 
  \otimes M_{(k)}^{i_k,(i_k - l)/2}. 
\end{split}
\end{equation}

Therefore, the following theorem is proved. 

\begin{theorem}\label{thm:Uij}
The multiplicity $U^{i,l}$ of $V_{\Z \gamma_k + l \gamma_k/2k}$ 
on both sides of \eqref{eq:mult-eq} is described in two ways, namely, 
\begin{align*}
  U^{i,l} 
  &= \bigoplus_{j=0}^{k-2} 
  M_{(k-1)}^{i,j} \otimes V_{\Z d - l d/2k + (i - 2j) d/2(k-1)}\\
 &= \bigoplus_{\substack{0 \le i_k \le k\\i_k \equiv b_k \pmod{2}}} 
  L(c_{k-1},h^{(k-1)}_{i+1, i_{k}+1}) 
  \otimes M_{(k)}^{i_k,(i_k - l)/2},  
\end{align*}
for $0 \le i \le k-1$ with $i \equiv b_{k-1} \pmod{2}$, and 
$0 \le l < 2k$ with $l \equiv b_k \pmod{2}$. 
The first one is a direct sum of irreducible $M_{(k-1)}^0 \otimes V_{\Z d}$-modules, and 
the second one is a direct sum of irreducible $L(c_{k-1},0) \otimes M_{(k)}^0$-modules.
\end{theorem}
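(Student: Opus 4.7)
The plan is to extract the same subspace $U^{i,l}$ as the multiplicity of the lattice module $V_{\Z\gamma_k + l\gamma_k/2k}$ on each side of \eqref{eq:mult-eq}, and to read off the two resulting formulas. Since \eqref{eq:mult-eq} has already been derived from the branching \eqref{eq:V_Lk_delta-dec}, the remaining work is bookkeeping of lattice cosets and $\gamma_k(0)$-eigenspaces, together with a compatibility check of the parity conditions on the indices.

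For the first formula I would apply the parafermion decomposition \eqref{eq:Lsl2ki_dec} (with $k$ replaced by $k-1$) to the factor $L_{\whmfsl_2}(k-1,i)$ on the left side of \eqref{eq:mult-eq}, obtaining a direct sum over $0\le j<k-1$ of $M_{(k-1)}^{i,j}\otimes V_{\Z\gamma_{k-1}+(i-2j)\gamma_{k-1}/2(k-1)+\Z\alpha_k+a_k\alpha_k/2}$. The next step is to rewrite the rank-two lattice $\Z\gamma_{k-1}+\Z\alpha_k$ as a disjoint union of cosets of its index-$k$ orthogonal sublattice $\Z d+\Z\gamma_k$, as in \eqref{eq:ZcZd_coset_dec_ZaZb} and \eqref{coset_gen-1}, and then repackage these cosets in terms of the parameter $l$ via \eqref{eq:def_l}, yielding \eqref{coset_gen-2}. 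Because $\la d,\gamma_k\ra=0$, every lattice module in this new parametrization factors as $V_{\Z d+\cdots}\otimes V_{\Z\gamma_k+l\gamma_k/2k}$, so the $\gamma_k(0)$-eigenspace of eigenvalue $l$ is precisely the first expression for $U^{i,l}$ as displayed in \eqref{eq:Lki_Vbvep_dec_2nd_form}.

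For the second formula I would instead apply \eqref{eq:Lsl2ki_dec} at level $k$ to each $L_{\whmfsl_2}(k,i_k)$ appearing on the right side of \eqref{eq:mult-eq}, producing the double sum \eqref{eq:rhs-mult-eq} whose lattice factors are $V_{\Z\gamma_k+(i_k-2p)\gamma_k/2k}$. Extracting the $\gamma_k(0)$-eigenvalue-$l$ part reduces to the congruence $i_k-2p\equiv l\pmod{2k}$. Using that $l\equiv b_k\equiv i_k\pmod 2$, this congruence is equivalent to $p\equiv(i_k-l)/2\pmod k$, which has a unique solution in $\{0,\ldots,k-1\}$ for each admissible $i_k$, and this delivers the second expression.

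The point requiring genuine care is the parity bookkeeping. The parameter $l$ defined by \eqref{eq:def_l} depends a priori on $i$, $a_k$, $p$ and $j$; one must verify that modulo $2$ it reduces to $i+a_k\equiv b_{k-1}+a_k\equiv b_k$, so that a single range $0\le l<2k$ with $l\equiv b_k\pmod 2$ parametrizes the cosets appearing on both sides exactly once. Once this matching of indexing sets is confirmed, the two descriptions of the $\gamma_k(0)$-eigenspace agree, and the theorem follows.
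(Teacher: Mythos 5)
Your proposal follows the paper's own argument essentially verbatim: decompose the left side of \eqref{eq:mult-eq} via \eqref{eq:Lsl2ki_dec} at level $k-1$ and the coset decomposition of $\Z\gamma_{k-1}+\Z\alpha_k$ over $\Z d+\Z\gamma_k$, decompose the right side via \eqref{eq:Lsl2ki_dec} at level $k$, and extract the $\gamma_k(0)$-eigenspace of eigenvalue $l$ from each, with the parity check $l\equiv i+a_k\equiv b_k\pmod 2$ matching the indexing sets. This is correct and is the same proof.
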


In the case $\bsa(k) = (0, \ldots,0)$, 
take the commutant of $S = L(c_1,0) \otimes \cdots \otimes L(c_{k-2}, 0)$ 
in $V_{L^{(k-1)}} \otimes V_{\Z\al_{k}} = V_{L^{(k)}}$.
Then we have 
\begin{equation}\label{eq:eq-1}
  L_{\whmfsl_2}(k-1,0) \otimes V_{\Z\al_{k}} 
  = \bigoplus_{j=0}^{\lfloor k/2 \rfloor}
  L(c_{k-1}, h^{(k-1)}_{1, 2j+1}) \otimes L_{\whmfsl_2}(k,2j),
\end{equation}
where $\lfloor k/2 \rfloor$ is the largest integer which does not exceed $k/2$. 

Let $U^0$ be the commutant of $V_{\Z \gamma_k}$ in \eqref{eq:eq-1}. 
Then $U^0 = U^{0,0}$ in the notation of Theorem \ref{thm:Uij}. 
In particular, 
\begin{equation}\label{eq:U0-1}
  U^0 = \bigoplus_{j = 0}^{k-2} M_{(k-1)}^{j} \otimes V_{\Z d - jd/(k-1)}, 
\end{equation}
which is a $\Z_{k-1}$-graded simple current extension of $M_{(k-1)}^0 \otimes V_{\Z d}$. 
Hence the following theorem holds by \cite[Theorem 2.14]{Yamauchi2004}, 
see also \cite{Adamovic2007}, \cite[Section 4.4.2]{CKM2017}. 

\begin{theorem}\label{thm:U0} 
\textup{(1)} 
$U^0$ is a simple, self-dual, rational, and $C_2$-cofinite vertex operator algebra 
of CFT-type with central charge $3(k-1)/(k+1)$. 

\textup{(2)}  
$U^0$ is described in two ways, namely, 
\begin{align*}
  U^0 
  &= \bigoplus_{j=0}^{k-2} M_{(k-1)}^{j} \otimes V_{\Z d - jd/(k-1)}\\
  &= \bigoplus_{j=0}^{\lfloor k/2 \rfloor} 
  L(c_{k-1},h^{(k-1)}_{1, 2j+1}) \otimes M_{(k)}^{2j,j}.
\end{align*}
The first one is a $\Z_{k-1}$-graded simple current extension of $M_{(k-1)}^0 \otimes V_{\Z d}$, 
and the second one is a non-simple current extension of $L(c_{k-1},0) \otimes M_{(k)}^0$.
\end{theorem}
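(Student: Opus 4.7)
The plan is to derive part (2) by specializing Theorem \ref{thm:Uij}, and then to obtain part (1) by applying simple current extension theory to the first of the two decompositions.

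First I would specialize Theorem \ref{thm:Uij} to $\bsa(k) = (0, \ldots, 0)$ (so $a_k = 0$ and $b_{k-1} = b_k = 0$), $i = 0$, and $l = 0$. By construction $U^0$ is the commutant of $V_{\Z \gamma_k}$ in \eqref{eq:eq-1}, so it coincides with $U^{0,0}$ in the notation of Theorem \ref{thm:Uij}. The first description of $U^{0,0}$ then reads
\[
  U^{0,0} = \bigoplus_{j=0}^{k-2} M_{(k-1)}^{0,j} \otimes V_{\Z d - jd/(k-1)},
\]
which is the first claimed form once we use $M_{(k-1)}^{0,j} = M_{(k-1)}^j$. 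The second description reads
\[
  U^{0,0} = \bigoplus_{\substack{0 \le i_k \le k \\ i_k \equiv 0 \pmod{2}}} L(c_{k-1}, h^{(k-1)}_{1, i_k+1}) \otimes M_{(k)}^{i_k, i_k/2},
\]
and relabeling $i_k = 2j$ for $0 \le j \le \lfloor k/2 \rfloor$ gives the second claimed form.

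For part (1), the first decomposition exhibits $U^0$ as a $\Z_{k-1}$-graded extension of the base $W := M_{(k-1)}^0 \otimes V_{\Z d}$. The tensor factor $M_{(k-1)}^0 = K(\mfsl_2, k-1)$ is simple, self-dual, rational, $C_2$-cofinite, and of CFT-type by property (1) of Section \ref{subsec:paraf_VOA} (applied with $k-1$ in place of $k$); the rank-one lattice VOA $V_{\Z d}$ has the same properties by standard results. Hence $W$ does as well. Each graded piece $M_{(k-1)}^{j} \otimes V_{\Z d - jd/(k-1)}$ is a simple current $W$-module: the parafermion factor is a simple current by property (4) of Section \ref{subsec:paraf_VOA}, and a nonzero coset of $V_{\Z d}$ inside its dual lattice is a simple current of $V_{\Z d}$. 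Applying \cite[Theorem 2.14]{Yamauchi2004} transfers simplicity, self-duality, rationality, $C_2$-cofiniteness, and CFT-type from $W$ to $U^0$. The central charge is additive over tensor factors, giving
\[
  c(U^0) = \frac{2(k-2)}{k+1} + 1 = \frac{3(k-1)}{k+1}.
\]

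The one nontrivial hypothesis of \cite[Theorem 2.14]{Yamauchi2004} is the integrality of conformal weights on every graded piece, so that $U^0$ is a vertex operator algebra and not merely a vertex operator superalgebra. This would otherwise be the main obstacle, but it is already furnished by the explicit realization of $U^0$ as the commutant of $V_{\Z \gamma_k}$ inside the lattice VOA $V_{L^{(k)}}$; since this realization equips $U^0$ with a bona fide VOA structure from the outset, no independent integrality verification is required.
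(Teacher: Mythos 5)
Your proposal is correct and follows essentially the same route as the paper: specialize Theorem \ref{thm:Uij} at $\bsa(k)=(0,\ldots,0)$, $i=l=0$ to get both decompositions, then apply \cite[Theorem 2.14]{Yamauchi2004} to the $\Z_{k-1}$-graded simple current extension of $M_{(k-1)}^0\otimes V_{\Z d}$, with integrality of weights guaranteed by the realization of $U^0$ inside the lattice vertex operator algebra $V_{L^{(k)}}$. The central charge computation $2(k-2)/(k+1)+1=3(k-1)/(k+1)$ is also correct.
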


The fusion product of irreducible $M_{(k-1)}^0$-modules $M_{(k-1)}^j$  
agrees with the fusion product of irreducible $V_{\Z d}$-modules $V_{\Z d - jd/(k-1)}$ 
under the correspondence
\[
  M_{(k-1)}^j \ \longleftrightarrow \ V_{\Z d - jd/(k-1)}, \quad 0 \le j < k-1. 
\]
Similarly, the fusion product of irreducible $L(c_{k-1},0)$-modules 
$L(c_{k-1},h^{(k-1)}_{1, 2j+1})$ 
agrees with the fusion product of irreducible $M_{(k)}^0$-modules 
$M_{(k)}^{2j,j}$ 
under the correspondence
\[
  L(c_{k-1},h^{(k-1)}_{1, 2j+1}) \ \longleftrightarrow \ M_{(k)}^{2j,j}, \quad 
  0 \le j \le \lfloor k/2 \rfloor.
\]
In fact, for $0 \le p \le q \le \lfloor k/2 \rfloor$, we have 
\[  
  L(c_{k-1},h^{(k-1)}_{1, 2p+1}) \boxtimes_{L(c_{k-1},0)} L(c_{k-1},h^{(k-1)}_{1, 2q+1}) 
  = \sum_{j=0}^{\min\{ 2p, k-2q \}} L(c_{k-1},h^{(k-1)}_{1, 2(q-p+j)+1})
\]
by \eqref{eq:Vir_fusion}, and 
\[
  M_{(k)}^{2p,p} \boxtimes_{M_{(k)}^0} M_{(k)}^{2q,q} 
  = \sum_{j=0}^{\min\{ 2p, k-2q \}} M_{(k)}^{2(q-p+j), q-p+j}
\]
by \eqref{eq:paraf_fusion}.

Since the left hand side of \eqref{eq:Lki_Vbvep_dec_2nd_form} is an 
$L_{\whmfsl_2}(k-1,0) \otimes V_{\Z \al_k}$-module, 
and since $U^0$ is the commutant of $V_{\Z \gamma_k}$ in 
$L_{\whmfsl_2}(k-1,0) \otimes V_{\Z \al_k}$, 
it follows that $U^{i,l}$ is a $U^0$-module. 

For simplicity of notation, set
\begin{gather*}
  A^j = M_{(k-1)}^j \otimes V_{\Z d - j d/(k-1)},\\
  X(i,j,l) = M_{(k-1)}^{i,j} \otimes V_{\Z d - l d/2k + (i - 2j) d/2(k-1)} 
\end{gather*}
for $0 \le i \le k-1$, $0 \le j < k-1$, and $0 \le l < 2k$.
Then 
$U^0 = \bigoplus_{j=0}^{k-2} A^j$   
and 
$U^{i,l} = \bigoplus_{j=0}^{k-2} X(i,j,l)$
as $A^0$-modules.  
For fixed $i$ and $l$, $X(i,j,l)$, $0 \le j < k-1$, 
are inequivalent irreducible $A^0$-modules. 
Thus the $U^0$-module $U^{i,l}$ is irreducible. 
In fact, the $U^0$-module structure on the direct sum $\bigoplus_{j=0}^{k-2} X(i,j,l)$  
which extends the $A^0$-module structure is unique 
\cite[Proposition 3.8]{SY2003}.
Since 
\begin{equation}\label{eq:fusion_prod_summand}
  A^j \boxtimes_{A^0} X(i,0,l) = X(i,j,l), 
\end{equation}
we have
\begin{equation}\label{eq:Uil_as_U0_module}
  U^{i,l} = U^0 \boxtimes_{A^0} X(i,0,l).
\end{equation}

The following lemma is a consequence of the isomorphism 
$M_{(k-1)}^{i,j} \cong M_{(k-1)}^{k-1-i,j-i}$ of $M_{(k-1)}^0$-modules 
in \eqref{eq:isom_Mij} for $k-1$ in place of $k$. 

\begin{lemma}\label{lem:equiv_M0_V_Zb-module} 
\textup{(1)} 
Let $0 \le i, i' \le k-1$, $0 \le j, j' < k-1$, and $0 \le l, l' < 2k$. 
Then $X(i,j,l) \cong X(i',j',l')$ 
as $A^0$-modules if and only if one of the following conditions holds.

\textup{(i)} 
$i = i'$, $j = j'$, and $l = l'$.

\textup{(ii)} 
$i' = k-1-i$, $j' \equiv j-i \pmod{k-1}$, and $l' \equiv k + l \pmod{2k}$.

\textup{(2)} 
$X(i,j,l)$, $0 \le j < i \le k-1$, $0 \le l < 2k$, are inequivalent to each other.
\end{lemma}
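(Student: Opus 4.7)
The plan is to reduce the question to two independent classification problems for the tensor factors of $A^0 = M_{(k-1)}^0 \otimes V_{\Z d}$, and then combine them. Since both $M_{(k-1)}^0$ and $V_{\Z d}$ are simple, rational, self-dual VOAs, every irreducible $A^0$-module is a tensor product of irreducibles of the factors, so $X(i,j,l)\cong X(i',j',l')$ as $A^0$-modules if and only if
\[
M_{(k-1)}^{i,j}\cong M_{(k-1)}^{i',j'} \text{ as } M_{(k-1)}^0\text{-modules, \quad and}
\]
the $V_{\Z d}$-cosets $-l d/2k+(i-2j)d/2(k-1)$ and $-l' d/2k+(i'-2j')d/2(k-1)$ differ by an element of $\Z d$.

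For the first condition I would use property (2) of Section~\ref{subsec:paraf_VOA} applied to $k-1$ in place of $k$: the modules $M_{(k-1)}^{i,j}$ with $0\le j<i\le k-1$ are a complete set of inequivalent irreducibles, and the only identification among the full list $\{M_{(k-1)}^{i,j}:0\le i\le k-1,\ 0\le j<k-1\}$ is $M_{(k-1)}^{i,j}\cong M_{(k-1)}^{k-1-i,\,j-i}$ with $j-i$ read mod $k-1$. This immediately gives two cases: either $(i',j')=(i,j)$, or $i'=k-1-i$ and $j'\equiv j-i\pmod{k-1}$.

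For the second condition I would translate the coset relation into the divisibility
\[
\frac{l'-l}{2k}+\frac{(i-i')-2(j-j')}{2(k-1)}\in\Z,
\]
equivalently $(l'-l)(k-1)+\bigl((i-i')+2(j'-j)\bigr)k\equiv 0\pmod{2k(k-1)}$. In the first case $i=i'$, $j=j'$ this collapses to $l\equiv l'\pmod{2k}$, giving condition (i). In the second case $i-i'+2(j'-j)\equiv 2i-(k-1)-2i=-(k-1)\pmod{2(k-1)}$, so the congruence becomes $(l'-l-k)(k-1)\equiv 0\pmod{2k(k-1)}$, i.e.\ $l'\equiv l+k\pmod{2k}$, yielding condition (ii). This completes part (1).

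For part (2) I would argue that if $0\le j<i\le k-1$ and $0\le j'<i'\le k-1$ and condition (ii) holds, then $j'=j-i+(k-1)$ (since $j-i<0$ forces the representative mod $k-1$ to be shifted up), so $j'=(k-1-i)+j$; but then $j'<i'=k-1-i$ would force $j<0$, a contradiction. Hence only condition (i) can occur within this range, and the $X(i,j,l)$ with $0\le j<i\le k-1$ and $0\le l<2k$ are pairwise inequivalent. The only real bookkeeping obstacle is tracking the signs and mod-$(k-1)$ reductions carefully in the second case of part (1); once that is done, part (2) is essentially automatic.
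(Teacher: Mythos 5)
Your proposal is correct and matches the paper's intent: the paper simply asserts the lemma as a consequence of the isomorphism $M_{(k-1)}^{i,j}\cong M_{(k-1)}^{k-1-i,\,j-i}$, and your argument supplies exactly the routine details (tensor-factor decomposition of irreducible $A^0$-modules plus the $V_{\Z d}$-coset congruence) that make this precise; the case-(ii) computation $l'\equiv l+k\pmod{2k}$ and the part (2) range argument both check out.
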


The above  lemma implies the next lemma. 

\begin{lemma}\label{lem:equiv_Uiell}
Let $0 \le i, i' \le k-1$ and $0 \le l, l' < 2k$. 
Then $U^{i,l} \cong U^{i',l'}$ as $U^0$-modules 
if and only if one of the following conditions holds.

\textup{(i)}
$i = i'$ and $l = l'$.

\textup{(ii)} 
$i' = k-1-i$ and $l' \equiv k + l \pmod{2k}$. 
\end{lemma}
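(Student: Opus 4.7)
The plan is to reduce the classification of irreducible $U^0$-modules up to equivalence to the already-solved classification of the summands $X(i,j,l)$ as $A^0$-modules. The key input is Theorem \ref{thm:U0}, which exhibits $U^0$ as a $\Z_{k-1}$-graded simple current extension $\bigoplus_{j=0}^{k-2}A^j$ of $A^0$. By the general theory of simple current extensions (in particular by \cite[Proposition 3.8]{SY2003}, already invoked in the paragraph leading up to \eqref{eq:Uil_as_U0_module}), every irreducible $U^0$-module arises as $U^0\boxtimes_{A^0} Y$ for an irreducible $A^0$-module $Y$, and two such extensions $U^0\boxtimes_{A^0} Y$ and $U^0\boxtimes_{A^0} Y'$ are equivalent as $U^0$-modules if and only if $Y' \cong A^j\boxtimes_{A^0} Y$ for some $0\le j< k-1$.

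Applying this to the identities $U^{i,l}=U^0\boxtimes_{A^0}X(i,0,l)$ and $A^j\boxtimes_{A^0}X(i,0,l)=X(i,j,l)$ from \eqref{eq:Uil_as_U0_module} and \eqref{eq:fusion_prod_summand}, the problem becomes: determine for which $(i,l)$ and $(i',l')$ there exists $j\in\{0,\dots,k-2\}$ with $X(i,j,l)\cong X(i',0,l')$ as $A^0$-modules. Lemma \ref{lem:equiv_M0_V_Zb-module} gives exactly two possibilities for such an isomorphism.

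The first possibility of Lemma \ref{lem:equiv_M0_V_Zb-module} forces $i=i'$, $j=0$, and $l=l'$, producing condition (i) of the present lemma. The second possibility forces $i'=k-1-i$, $l'\equiv k+l\pmod{2k}$, together with $j\equiv i\pmod{k-1}$; since $0\le i\le k-1$, the residue class of $i$ mod $k-1$ always contains a representative $j\in\{0,\dots,k-2\}$ (take $j=i$ when $i\le k-2$ and $j=0$ when $i=k-1$), so this possibility is realizable precisely under condition (ii). Conversely, in either condition the displayed $j$ witnesses an $A^0$-isomorphism, hence a $U^0$-isomorphism.

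The main obstacle is not computational but conceptual: one must cite carefully enough of the simple current extension machinery to justify both the parametrization of irreducible $U^0$-modules by orbits of irreducible $A^0$-modules under the $\Z_{k-1}$-action, and the uniqueness of the extended $U^0$-module structure. Once these are in hand, the proof is a direct bookkeeping exercise that merges condition (ii) of Lemma \ref{lem:equiv_M0_V_Zb-module} (which carries a free parameter $j$ modulo $k-1$) into a single statement that no longer mentions $j$.
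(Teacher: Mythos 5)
Your proposal is correct and matches the paper's intended argument: the paper derives Lemma \ref{lem:equiv_Uiell} directly from Lemma \ref{lem:equiv_M0_V_Zb-module} (stating only "The above lemma implies the next lemma"), using exactly the reduction you describe via \eqref{eq:Uil_as_U0_module}, \eqref{eq:fusion_prod_summand}, and the uniqueness of the extended module structure from \cite[Proposition 3.8]{SY2003}. Your write-up simply makes explicit the bookkeeping (matching $X(i',0,l')$ against the $A^0$-summands $X(i,j,l)$ and absorbing the free parameter $j\equiv i\pmod{k-1}$) that the paper leaves to the reader.
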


Next, we calculate the difference of the conformal weight of two irreducible $A^0$-modules. 

\begin{lemma}\label{lem:dif_of_conf_wt}
Let $0 \le i \le k-1$, $0 \le j < k-1$, and $0 \le s < 2(k-1)k$. 
Then the difference of the conformal weight of 
$M_{(k-1)}^{i,p} \otimes V_{\Z d + s d/2(k-1)k - p d/(k-1)}$ for $p = 0,j$ is
as follows.
\begin{equation}\label{eq:diff_wt_MijVZdsj} 
\begin{split}
  & h(M_{(k-1)}^{i,j} \otimes V_{\Z d + s d/2(k-1)k - j d/(k-1)})
  - h(M_{(k-1)}^{i,0} \otimes V_{\Z d + s d/2(k-1)k}) \\
  &\qquad 
  \equiv \frac{j(i - s)}{k-1} \pmod{\Z}.
\end{split}
\end{equation}
\end{lemma}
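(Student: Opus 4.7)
\bigskip

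\textbf{Proof plan for Lemma \ref{lem:dif_of_conf_wt}.}

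Since the conformal weight of a tensor product module is the sum of the conformal weights of the factors, the left-hand side of \eqref{eq:diff_wt_MijVZdsj} splits as
\begin{equation*}
\bigl(h(M_{(k-1)}^{i,j}) - h(M_{(k-1)}^{i,0})\bigr) + \bigl(h(V_{\Z d + sd/2(k-1)k - jd/(k-1)}) - h(V_{\Z d + sd/2(k-1)k})\bigr).
\end{equation*}
The plan is to evaluate each bracket separately and then combine them modulo $\Z$.

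For the parafermion contribution, I would apply formula \eqref{eq:top-wt-Mkij} with $k-1$ in place of $k$. With $j=0$ one has $h(M_{(k-1)}^{i,0}) = ((k-1)i - i^2)/(2(k-1)(k+1))$, and a direct expansion of the numerator of $h(M_{(k-1)}^{i,j})$ gives, after collecting terms, exactly
\begin{equation*}
h(M_{(k-1)}^{i,j}) - h(M_{(k-1)}^{i,0}) = \frac{j(i-j)}{k-1}.
\end{equation*}
The key cancellation to watch for is that $-(i-2j)^2 + i^2 = 4ij - 4j^2$ combines cleanly with $2(k-1)(i-j+1)j - 2(k-1)j$ to factor as $2(k+1)j(i-j)$, whereupon the prefactor $1/(2(k-1)(k+1))$ yields $j(i-j)/(k-1)$ on the nose.

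For the lattice contribution I would use the standard fact that, because the lattice $\Z d$ is even, the conformal weight of $V_{\Z d + \mu}$ is congruent to $\langle \mu,\mu \rangle/2$ modulo $\Z$ for any representative $\mu$ of the coset. Setting $\mu_j = (s-2kj)d/(2(k-1)k)$ and using $\langle d,d \rangle = 2(k-1)k$, a direct computation gives
\begin{equation*}
\frac{\langle \mu_j, \mu_j \rangle - \langle \mu_0, \mu_0 \rangle}{2} = \frac{(s-2kj)^2 - s^2}{4(k-1)k} = \frac{j(kj - s)}{k-1}.
\end{equation*}
Adding the two contributions yields $j(i - j)/(k-1) + j(kj - s)/(k-1) = j(i - s)/(k-1) + j^2$, and since $j^2 \in \Z$ the congruence \eqref{eq:diff_wt_MijVZdsj} follows.

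The computations are largely routine; the only subtlety I expect is keeping the lattice normalization straight (remembering that $\langle d, d\rangle = 2(k-1)k$ and that we only have equality modulo $\Z$, not on the nose, since the true conformal weight of $V_{\Z d + \mu}$ is the \emph{minimum} norm in the coset rather than $\langle \mu, \mu\rangle/2$ itself). Once the two pieces are in hand, the final step is the observation that the cross term $(k-1)j^2/(k-1) = j^2$ drops out modulo $\Z$, which is what allows the clean expression $j(i-s)/(k-1)$ to appear.
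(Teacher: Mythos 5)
Your proposal follows essentially the same route as the paper: split the conformal weight of the tensor product into the parafermion and lattice contributions, compute each from \eqref{eq:top-wt-Mkij} and the norm of a coset representative of $\Z d$, and add; your computations (including the factorization $2(k+1)j(i-j)$ in the numerator and the lattice term $j(kj-s)/(k-1)$, which differs from the paper's $j(j-s)/(k-1)$ by the integer $j^2$) are correct, and your caveat that the lattice weight is only determined modulo $\Z$ by $\langle\mu,\mu\rangle/2$ is exactly the right level of care.

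The one point you should patch: formula \eqref{eq:top-wt-Mkij} is stated only for $0 \le j \le i$, whereas the lemma allows any $0 \le j < k-1$, so $j$ may exceed $i$. For $j > i$ your claimed \emph{exact} equality $h(M_{(k-1)}^{i,j}) - h(M_{(k-1)}^{i,0}) = j(i-j)/(k-1)$ fails; one must first use the isomorphism $M_{(k-1)}^{i,j} \cong M_{(k-1)}^{k-1-i,\,j-i}$ from \eqref{eq:isom_Mij}, which gives $h(M_{(k-1)}^{i,j}) - h(M_{(k-1)}^{i,0}) = (j-i)(k-1-j)/(k-1) = (j-i) + j(i-j)/(k-1)$, so the congruence modulo $\Z$ still holds and the rest of your argument goes through unchanged. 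The paper handles this case explicitly, and your write-up should too.
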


Indeed, we have $h(M_{(k-1)}^{i,j})$ for $0 \le j \le i \le k-1$ 
by \eqref{eq:top-wt-Mkij} for $k-1$ in place of $k$,  
and $h(M_{(k-1)}^{i,j})$ for $0 \le i \le j \le k-1$ 
is obtained by using \eqref{eq:isom_Mij} for $k-1$ in place of $k$. 
In fact,  
\[
  h(M_{(k-1)}^{i,j}) - h(M_{(k-1)}^{i,0}) \equiv \frac{j(i-j)}{k-1} \pmod{\Z}.
\]
Since 
\[
  h(V_{\Z d + s d/2(k-1)k - j d/(k-1)}) - h(V_{\Z d + s d/2(k-1)k})
  \equiv \frac{j(j-s)}{k-1} \pmod{\Z}, 
\]
Eq. \eqref{eq:diff_wt_MijVZdsj} holds.

The following theorem holds,  
see \cite{Adamovic2007}, \cite[Section 4.4.2]{CKM2017}, \cite[Section 3.1]{YY2018}. 

\begin{theorem}\label{thm:irr_U0-mod}
\textup{(1)} 
Any irreducible $U^0$-module is isomorphic to $U^{i,l}$ for some 
$0 \le i \le k-1$, $0 \le l < 2k$.

\textup{(2)}  
The irreducible $U^0$-modules 
$U^{i,l}$, $0 \le i \le k-1$, $0 \le l < 2k$, 
are inequivalent to each other except for the isomorphism
\begin{equation}\label{eq:Uil_equivalence}
  U^{i,l} \cong U^{k-1-i, k+l}.
\end{equation}

\textup{(3)} 
There are exactly $k^2$ inequivalent irreducible $U^0$-modules. 

\textup{(4)} 
The conformal weight of any irreducible $U^0$-module except for $U^0$ is positive.
\end{theorem}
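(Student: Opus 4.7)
My plan is to leverage the $\Z_{k-1}$-graded simple current extension $U^0 = \bigoplus_{j=0}^{k-2} A^j$ over $A^0 = M_{(k-1)}^0 \otimes V_{\Z d}$ established in Theorem~\ref{thm:U0}, together with the general classification of modules over simple current extensions (\cite[Theorem~2.14]{Yamauchi2004}; the classification for $U^0$ was essentially carried out in \cite{Adamovic2007}, \cite[Section~4.4.2]{CKM2017}, and \cite[Section~3.1]{YY2018}, and I would adapt it to the notation of this paper). That theory states that every irreducible $U^0$-module has the form $U^0 \boxtimes_{A^0} M$ for an irreducible $A^0$-module $M$ whose $\Z_{k-1}$-orbit under fusion by the simple currents $A^j$ satisfies the trivial-monodromy condition $h(A^j \boxtimes_{A^0} M) - h(M) \in \Z$ for every $j$, and that two such inductions are isomorphic as $U^0$-modules precisely when their underlying $A^0$-orbits coincide.

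For part~(1), I would first normalize each $\Z_{k-1}$-orbit by fusing with an appropriate $A^p$ so that its representative has parafermion index $0$, namely $M = M_{(k-1)}^{i,0} \otimes V_{\Z d + td/2(k-1)k}$ for some $0 \le i \le k-1$ and $0 \le t < 2(k-1)k$. Lemma~\ref{lem:dif_of_conf_wt} then computes the weight differences in the orbit as $j(i-t)/(k-1)$ modulo $\Z$, so the integrality condition is equivalent to $t \equiv i \pmod{k-1}$. Solving $t \equiv ki - l(k-1) \pmod{2(k-1)k}$ for a unique $l \in \{0,\ldots,2k-1\}$ identifies $M$ with $X(i,0,l)$, so $U^0 \boxtimes_{A^0} M = U^{i,l}$ by \eqref{eq:Uil_as_U0_module}. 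Thus every irreducible $U^0$-module is some $U^{i,l}$.

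Parts~(2) and~(3) then follow by bookkeeping: the orbit-coincidence criterion together with Lemma~\ref{lem:equiv_Uiell} yields exactly the equivalence $U^{i,l} \cong U^{i',l'}$ iff $(i',l')=(i,l)$ or $i' = k-1-i$ and $l' \equiv k+l \pmod{2k}$. For the count in~(3), the involution $(i,l) \mapsto (k-1-i, l')$ with $l' \equiv k+l \pmod{2k}$ on $\{0,\ldots,k-1\} \times \{0,\ldots,2k-1\}$ has no fixed point, since $l \equiv k+l \pmod{2k}$ is already impossible; hence all orbits have size two, yielding $2k^2/2 = k^2$ equivalence classes.

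For part~(4), I would compute $h(U^{i,l}) = \min_{0 \le j \le k-2} h(X(i,j,l))$, using \eqref{eq:top-wt-Mkij} for $h(M_{(k-1)}^{i,j})$ and $\la d,d\ra = 2(k-1)k$ for the lattice factor, and check that the resulting expression is strictly positive unless $(i,l)$ is equivalent to $(0,0)$; the detailed verification and the identification of the minimizer $j$ are the content of Appendix~A. The main obstacle lies in part~(1): invoking the simple current extension classification with the precise integrality (monodromy) criterion so that the parametrization by orbits with $t \equiv i \pmod{k-1}$ matches the pairs $(i,l)$ bijectively, while using \cite[Proposition~3.8]{SY2003} to guarantee uniqueness of the $U^0$-module structure on each direct sum $U^0 \boxtimes_{A^0} X(i,0,l)$. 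The remaining parts are largely combinatorial and computational.
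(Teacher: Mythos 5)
Your proposal is correct and follows essentially the same route as the paper: pick an irreducible $A^0$-summand normalized to $M_{(k-1)}^{i,0}\otimes V_{\Z d+sd/2(k-1)k}$, impose integrality of the weight differences via Lemma~\ref{lem:dif_of_conf_wt} to force $s\equiv i\pmod{k-1}$, solve $s\equiv ik-l(k-1)$ to identify the module with $U^{i,l}$ via \eqref{eq:Uil_as_U0_module}, and then use Lemma~\ref{lem:equiv_Uiell} and the fixed-point-free involution $(i,l)\mapsto(k-1-i,k+l)$ for the count. The only (harmless) divergence is in part~(4), where the paper simply notes that every irreducible $A^0$-summand other than $A^0$ itself has positive conformal weight, rather than carrying out the minimization over $j$ that you propose (and note that Appendix~A only treats the case $i=0$).
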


\begin{proof}
Let $W$ be an irreducible $U^0$-module. 
Then $W$ is a direct sum of irreducible $A^0$-modules. 
Let $X$ be an irreducible $A^0$-submodule of $W$. 
We may assume that 
$X = M_{(k-1)}^{i,0} \otimes V_{\Z d + s d/2(k-1)k}$ 
for some $0 \le i \le k-1$ and $0 \le s < 2(k-1)k$ by \eqref{eq:fusion_prod_summand}. 
Then 
\[
  A^1 \boxtimes_{A^0} X = M_{(k-1)}^{i,1} \otimes V_{\Z d + s d/2(k-1)k - d/(k-1)}
\]
is contained in $W$,  
so $h(A^1 \boxtimes_{A^0}  X) - h(X)$ is an integer. 
Thus $s \equiv i \pmod{k-1}$ by Lemma \ref{lem:dif_of_conf_wt}. 
Then $s \equiv i - m (k-1) \pmod{2(k-1)k}$ for some $0 \le m < 2k$. 
Take $0 \le l < 2k$ such that $l \equiv i + m \pmod{2k}$. 
Then $s \equiv ik - l(k-1) \pmod{2(k-1)k}$,   
and $W = U^{i,l}$ by \eqref{eq:Uil_as_U0_module}. 
Thus the assertion (1) holds. 

Lemma \ref{lem:equiv_Uiell} implies the assertion (2). 
The assertion (3) is clear from (1) and (2). 
The conformal weight of any irreducible $M_{(k-1)}^{0}$-module 
except for $M_{(k-1)}^{0}$ is positive.  
Thus the assertion (4) holds. 
\end{proof}

There are $(k-1)^2k^2$ inequivalent irreducible $A^0$-modules, 
namely, 
\[
  \Irr(A^0) = \{ M_{(k-1)}^{i,j} \otimes V_{\Z d + s d/2(k-1)k} 
  \mid 0 \le j < i \le k-1, 0 \le  s < 2(k-1)k \}. 
\]
Only $(k-1)k^2$ of them can be direct summands 
of an irreducible $U^0$-module.   
In fact, let 
\begin{equation*}
  \Irr^0(A^0) = \{ X(i,j,l) \mid 0 \le j < i \le k-1, 0 \le l <2k \}.
\end{equation*}
Then each irreducible $U^0$-module is a direct sum of 
$k-1$ inequivalent irreducible $A^0$-modules in $\Irr^0(A^0)$.

Recall the automorphism $\theta$ of $M_{(k)}^0$, 
which is induced from the $-1$-isometry $\alpha \mapsto -\alpha$ of 
the lattice $L^{(k)}$. 
The isometry induces an automorphism of $U^0$ of order two as well. 
We denote the automorphism by the same symbol $\theta$. 
Then
\begin{equation}\label{eq:theta_act_Uil}
  U^{i,l} \circ \theta \cong U^{i,-l}.
\end{equation}

\section{Fusion rule of irreducible $U^0$-modules}\label{sec:fusion_rule_U0}

The fusion rule of the irreducible $U^0$-modules 
was previously known \cite[Section 4.4.2]{CKM2017}, 
see also \cite[Section 3.2]{YY2018}. 
The fusion rule in our notation  
for the irreducible $U^0$-modules is as follows. 

\begin{theorem}\label{thm:fusion_prod_U0} 
Let $0 \le i_1, i_2 \le k-1$ and $0 \le l_1, l_2 < 2k$. Then 
\begin{equation}\label{eq:fusion_prod_U0}
  U^{i_1,l_1} \boxtimes_{U^0} U^{i_2,l_2}  =  \sum_{r \in R(i_1,i_2)} U^{r,l_1 + l_2},
\end{equation}
where
$R(i_1,i_2)$ is the set of integers $r$ satisfying 
\begin{equation*}
  |i_1-i_2| \le r \le \min \{i_1+i_2, 2(k-1) - i_1 - i_2\}, \quad i_1+i_2+r \in 2\Z, 
\end{equation*}
and $l_1 + l_2$ is considered to be modulo $2k$. 
The irreducible $U^0$-modules $U^{r,l_1+l_2}$, $r \in R(i_1,i_2)$, on 
the right hand side of \eqref{eq:fusion_prod_U0} are inequivalent to each other. 
\end{theorem}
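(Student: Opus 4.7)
The plan is to reduce the computation to the fusion product over the smaller vertex operator algebra $A^0 = M_{(k-1)}^0 \otimes V_{\Z d}$. Since by \eqref{eq:U0-1} the algebra $U^0$ is a $\Z_{k-1}$-graded simple current extension of $A^0$, and by \eqref{eq:Uil_as_U0_module} every irreducible $U^0$-module $U^{i,l}$ is the induction $U^0 \boxtimes_{A^0} X(i,0,l)$ of a single irreducible $A^0$-summand, I would invoke the fact that induction along a simple current extension is a tensor functor (see \cite[Proposition 3.8]{SY2003} or \cite[Theorem 2.14]{Yamauchi2004}) to obtain
\[
  U^{i_1,l_1} \boxtimes_{U^0} U^{i_2,l_2}
  \;=\; U^0 \boxtimes_{A^0} \bigl( X(i_1,0,l_1) \boxtimes_{A^0} X(i_2,0,l_2) \bigr).
\]

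Next I would compute the $A^0$-fusion product factor by factor. The parafermion part follows from \eqref{eq:paraf_fusion} applied with $k-1$ in place of $k$ and $j_1 = j_2 = 0$, yielding
\[
  M_{(k-1)}^{i_1,0} \boxtimes_{M_{(k-1)}^0} M_{(k-1)}^{i_2,0}
  = \sum_{r \in R(i_1,i_2)} M_{(k-1)}^{r,(r-i_1-i_2)/2},
\]
with $R(i_1,i_2)$ exactly as in the statement. The lattice $V_{\Z d}$-fusion combines additively to give $V_{\Z d - (l_1+l_2)d/2k + (i_1+i_2)d/2(k-1)}$. A short index check shows that the second parafermion index $(r - i_1 - i_2)/2 \pmod{k-1}$ is precisely the $j_r$ for which the translate $(i_1+i_2)d/2(k-1)$ coincides with $(r - 2j_r)d/2(k-1)$ modulo $\Z d$. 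Hence each summand has the form $X(r, j_r, l_1+l_2)$, and applying $U^0 \boxtimes_{A^0} -$ together with \eqref{eq:fusion_prod_summand} and \eqref{eq:Uil_as_U0_module} converts each such summand into $U^{r, l_1+l_2}$.

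Finally, the pairwise inequivalence of the $U^{r,l_1+l_2}$, $r \in R(i_1,i_2)$, is immediate from Lemma \ref{lem:equiv_Uiell}: an equivalence between $U^{r_1,l_1+l_2}$ and $U^{r_2,l_1+l_2}$ with $r_1 \ne r_2$ would force $l_1+l_2 \equiv l_1+l_2 + k \pmod{2k}$, which fails. The main technical hurdle is the middle step: matching the parafermion second index arising from \eqref{eq:paraf_fusion} with the lattice coset translate so that the $A^0$-fusion summands are correctly recognised as $X(r, j_r, l_1+l_2)$, and verifying that the tensor-functor property applies in the module category at hand. Once that bookkeeping is in place, the induction step and the inequivalence check are formal.
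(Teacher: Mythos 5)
Your proposal follows essentially the same route as the paper: induce from $A^0 = M_{(k-1)}^0 \otimes V_{\Z d}$, use the fact that $X \mapsto U^0 \boxtimes_{A^0} X$ is a (braided) tensor functor on the relevant subcategory, compute the $A^0$-fusion as the product of the parafermion and lattice fusion rules, match the second parafermion index with the lattice coset, and get inequivalence from Lemma \ref{lem:equiv_Uiell}. The only quibble is the citation: the tensor-functor property is what \cite[Theorem 2.67]{CKM2017} provides (on the subcategory $\CC_{A^0}^0$), not \cite{SY2003} or \cite{Yamauchi2004}, and the factorwise fusion computation rests on \cite[Theorem 2.10]{ADL2005}.
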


\begin{proof} 
Let $\CC_{A^0}$ be the category of $A^0$-modules, and  
let $\CC_{A^0}^0$ be the full subcategory of $\CC_{A^0}$ 
consisting of the objects $X$ of $\CC_{A^0}$ such that 
$U^0 \boxtimes_{A^0} X$ is a $U^0$-module \cite[Definition 2.66]{CKM2017}. 
Then $\Irr^0(A^0)$ constitutes the simple objects of $\CC_{A^0}^0$   
\cite[Proposition 2.65]{CKM2017}. 
The category $\CC_{A^0}^0$ is a $\C$-linear additive braided monoidal category 
with structures induced from $\CC_{A^0}$, 
and the functor $F : \CC_{A^0}^0 \to \CC_{U^0}$; 
$X \mapsto U^0 \boxtimes_{A^0} X$ is a braided tensor functor 
\cite[Theorem 2.67]{CKM2017}, 
where $\CC_{U^0}$ is the category of $U^0$-modules.

We fix $0 \le i_1, i_2 \le k-1$ and $0 \le l_1, l_2 < 2k$. 
Since the category $\CC_{A^0}^0$ is closed under the fusion product, 
we have
\begin{equation}\label{eq:fusion_prod_A0}
  X(i_1,0,l_1) \boxtimes_{A^0} X(i_2,0,l_2) 
  = \sum_{X(i_3,j_3,l_3) \in \Irr^0(A^0)} n(i_3,j_3,l_3) X(i_3,j_3,l_3),
\end{equation}
where 
\begin{equation*}
  n(i_3,j_3,l_3) = \dim I_{A^0} \binom{X(i_3,j_3,l_3)}{X(i_1,0,l_1) \quad X(i_2,0,l_2)}
\end{equation*}
is the fusion rule, that is, the dimension of the space of intertwining operators 
of type $\binom{X(i_3,j_3,l_3)}{X(i_1,0,l_1) \quad X(i_2,0,l_2)}$.
Let
\begin{equation*}
  n(i_3,l_3) = \dim I_{U^0} \binom{U^{i_3,l_3}}{U^{i_1,l_1} \quad U^{i_2,l_2}}
\end{equation*}
be the fusion rule of the irreducible $U^0$-modules $U^{i_p,l_p}$, $p = 1,2,3$.
Then
\begin{equation}\label{eq:fusion_prod_U0-1}
  U^{i_1,l_1} \boxtimes_{U^0} U^{i_2,l_2} 
  = \sum_{U^{i_3,l_3} \in \Irr(U^0)} n(i_3,l_3) U^{i_3,l_3}.
\end{equation}

Since $U^{i,l} = U^0 \boxtimes_{A^0} X(i,j,l)$ for any $0 \le j < k-1$, 
and since the functor $F : \CC_{A^0}^0 \to \CC_{U^0}$; 
$X \mapsto U^0 \boxtimes_{A^0} X$ is a braided tensor functor, 
it follows from \eqref{eq:fusion_prod_A0} that 
\begin{equation*}
  U^{i_1,l_1} \boxtimes_{U^0} U^{i_2,l_2} 
  = \sum_{X(i_3,j_3,l_3) \in \Irr^0(A^0)} n(i_3,j_3,l_3) U^{i_3,l_3}.
\end{equation*}
Thus $n(i_3,l_3) = \sum_{j_3 = 0}^{k-2} n(i_3,j_3,l_3)$ 
by \eqref{eq:fusion_prod_U0-1}.

Now, 
\begin{align*}
  n(i_3,j_3,l_3) 
  &= 
  \dim I_{M_{(k-1)}^0} \binom{ M_{(k-1)}^{i_3,j_3} }{ M_{(k-1)}^{i_1,0} \quad M_{(k-1)}^{i_2,0} }\\
  &\qquad \cdot 
  \dim I_{V_{\Z d}} 
  \binom{ V_{\Z d - l_3 d/2k + (i_3-2j_3) d/2(k-1)} }
  { V_{\Z d - l_1 d/2k + i_1 d/2(k-1)} 
  \quad V_{\Z d - l_2 d/2k + i_2 d/2(k-1)} }
\end{align*}
by \cite[Theorem 2.10]{ADL2005}. 
The first term of the right hand side of the above equation is $0$ or $1$, 
and it is $1$ if and only if 
$i_3 \in R(i_1,i_2)$ and $i_3 - 2j_3 \equiv i_1 + i_2 \pmod{2(k-1)}$ 
by \eqref{eq:paraf_fusion} for $k-1$ in place of $k$.  
The second term of the right hand side is $0$ or $1$, 
and it is $1$ if and only if 
\begin{equation}\label{eq:condition_on_l}
 - (l_1+l_2)(k-1) + (i_1+i_2)k \equiv - l_3(k-1) + (i_3-2j_3)k \pmod{2(k-1)k}
\end{equation}
by the fusion rule for $V_{\Z d}$ \cite[Chapter 12]{DL1993}. 
Hence $n(i_3,j_3,l_3)$ is $0$ or $1$, and  
it is $1$ if and only if $i_3 \in R(i_1,i_2)$, $i_3 - 2j_3 \equiv i_1 + i_2 \pmod{2(k-1)}$, and 
the condition \eqref{eq:condition_on_l} is satisfied. 

If $i_3 \in R(i_1,i_2)$, then there is a unique $0 \le j_3 < k-1$ such that 
$i_3 - 2j_3 \equiv i_1 + i_2 \pmod{2(k-1)}$. 
Moreover, if $i_3 - 2j_3 \equiv i_1 + i_2 \pmod{2(k-1)}$, 
then the condition \eqref{eq:condition_on_l} is 
equivalent to the condition that $l_1 + l_2 \equiv l_3 \pmod{2k}$. 
Therefore, \eqref{eq:fusion_prod_U0} holds. 
\end{proof}

We have the following two corollaries.

\begin{corollary}\label{cor:symmetry_in_fusion_rule}
Let $\zeta = \exp(2\pi\sqrt{-1}/k)$.
Then the map
$\varphi : U^{i,l} \mapsto \zeta^{l} U^{i,l}$ for $0 \le i \le k-1$
and $0 \le l < 2k$
defines an automorphism of the fusion algebra of $U^0$ of order $k$
which is compatible with the isomorphism \eqref{eq:Uil_equivalence}.
\end{corollary}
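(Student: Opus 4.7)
The plan is to verify in turn that $\varphi$ (i) is well-defined on isomorphism classes, (ii) is a ring homomorphism, and (iii) has order exactly $k$. Each item will reduce to a one-line consequence of Theorem \ref{thm:fusion_prod_U0} together with $\zeta^{k}=1$.

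For (i), compatibility with \eqref{eq:Uil_equivalence} requires $\zeta^{l}=\zeta^{k+l}$, which is immediate since $\zeta$ is a primitive $k$-th root of unity. This ensures that $\varphi$ descends to a $\C$-linear endomorphism of the fusion algebra which merely rescales each basis element $[U^{i,l}]$ by a nonzero scalar.

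For (ii), I would exploit the crucial structural feature of the fusion decomposition
\[
  U^{i_1,l_1} \boxtimes_{U^0} U^{i_2,l_2} = \sum_{r \in R(i_1,i_2)} U^{r,\,l_1+l_2}:
\]
the second index is the \emph{same} for every simple summand on the right, so $\varphi$ acts on the whole sum as multiplication by the common scalar $\zeta^{l_1+l_2}=\zeta^{l_1}\zeta^{l_2}$, which matches $\varphi(U^{i_1,l_1})\cdot\varphi(U^{i_2,l_2})$. Combined with the fact that $\varphi$ fixes the unit $U^{0,0}=U^{0}$, this gives a ring homomorphism, and the nonzero rescaling on basis elements makes it an automorphism.

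For (iii), $\varphi^{m}$ acts on $U^{i,l}$ by the scalar $\zeta^{ml}$, which equals $1$ for all $(i,l)$ iff $\zeta^{m}=1$, i.e.\ iff $k\mid m$; sharpness is witnessed by $\varphi(U^{0,1})=\zeta\,U^{0,1}$. Hence $\varphi$ has order exactly $k$. There is no substantive obstacle in this proof: the entire corollary is a repackaging of the additive $\Z_{2k}$-behavior of the second index in \eqref{eq:fusion_prod_U0} together with $\zeta^{k}=1$, and the only point that warrants a moment of care is the well-definedness check in (i), which works precisely because the shift $l \mapsto k+l$ in \eqref{eq:Uil_equivalence} matches the period of $\zeta$.
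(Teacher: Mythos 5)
Your proof is correct and is exactly the argument the paper intends: the corollary is stated without proof as an immediate consequence of Theorem \ref{thm:fusion_prod_U0}, and your three checks (compatibility with $U^{i,l}\cong U^{k-1-i,k+l}$ via $\zeta^{k}=1$, multiplicativity via the common second index $l_1+l_2$ of all fusion summands, and exact order $k$ witnessed on $U^{0,1}$) are precisely the verifications being left to the reader.
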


\begin{corollary}\label{cor:simple_currents}
There are exactly 
$2k$ inequivalent simple current $U^0$-modules, 
which are represented by $U^{0,l}$, $0 \le l < 2k$.
\end{corollary}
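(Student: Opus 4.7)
The plan is to exploit Theorem \ref{thm:fusion_prod_U0} directly. An irreducible $U^0$-module $U^{i,l}$ is a simple current if and only if its fusion product with every irreducible $U^0$-module has exactly one irreducible summand. Since the $U^{r,l_1+l_2}$ for $r\in R(i_1,i_2)$ appearing in \eqref{eq:fusion_prod_U0} are inequivalent, this reduces to showing that $|R(i,i')|=1$ for every $0\le i'\le k-1$. Note that the second index $l$ plays no role in this criterion, as $l_1+l_2 \pmod{2k}$ is just a shift.

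First, I would handle the case $i=0$: the conditions defining $R(0,i')$ force $r=i'$, so $|R(0,i')|=1$ and $U^{0,l}$ is a simple current. Next, for $1\le i\le k-2$, I would take $i'=i$; then $0\le r\le \min\{2i,\,2(k-1)-2i\}$ with $r$ even allows both $r=0$ and $r=2$, so $|R(i,i)|\ge 2$ and $U^{i,l}$ is not a simple current. Finally, for $i=k-1$, the inequality $|k-1-i'|\le r\le k-1-i'$ forces $r=k-1-i'$, and the parity condition $(k-1)+i'+r=2(k-1)$ is automatic, giving $|R(k-1,i')|=1$ and thus $U^{k-1,l}$ is also a simple current.

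At this point I would invoke the isomorphism \eqref{eq:Uil_equivalence}, which gives $U^{k-1,l}\cong U^{0,\,k+l}$, so every simple current of the form $U^{k-1,l}$ is already represented by some $U^{0,l'}$ with $0\le l'<2k$. Therefore the set of equivalence classes of simple current $U^0$-modules equals $\{U^{0,l}\mid 0\le l<2k\}$. To conclude the count, I would apply Lemma \ref{lem:equiv_Uiell} to these modules: condition (ii) of the lemma would require $0=k-1$, which is excluded since $k\ge 2$, so only condition (i) can hold, meaning $U^{0,l}\cong U^{0,l'}$ forces $l=l'$. Hence the $2k$ modules $U^{0,l}$ are pairwise inequivalent, which yields the claimed count.

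There is no real obstacle here; the entire argument is a direct bookkeeping consequence of Theorem \ref{thm:fusion_prod_U0}, Lemma \ref{lem:equiv_Uiell}, and the isomorphism \eqref{eq:Uil_equivalence}. The only mild subtlety is remembering to fold the simple currents at $i=k-1$ into those at $i=0$ via \eqref{eq:Uil_equivalence} so as not to double-count.
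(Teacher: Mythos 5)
Your proposal is correct and is essentially the argument the paper intends: the corollary is stated as an immediate consequence of Theorem \ref{thm:fusion_prod_U0}, and your case analysis of $|R(i,i')|$ (simple currents exactly at $i=0$ and $i=k-1$, folded together via \eqref{eq:Uil_equivalence}, then counted with Lemma \ref{lem:equiv_Uiell}) is the direct derivation. No gaps.
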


Let $U^l = U^{0,l}$. 
Then
\begin{equation}\label{eq:dec_Ul}
\begin{split}
  U^l 
  &= \bigoplus_{j=0}^{k-2} 
  M_{(k-1)}^j \otimes V_{\Z d - l d/2k - j d/(k-1)}\\
  &= U^0 \boxtimes_{A^0} X(0,0,l),
\end{split}
\end{equation}
and the set of equivalence classes of simple current $U^0$-modules is 
\begin{equation}\label{eq:sc_of_U0}
  \SC{U^0} = \{ U^l \mid 0 \le l < 2k \} \quad \text{with} \quad 
  U^l \boxtimes_{U^0} U^{l'} = U^{l+l'}. 
\end{equation}

The conformal weight of $U^l$ satisfies 
$h(U^l) \equiv h(X(0,0,l)) \pmod{\Z}$ 
by \eqref{eq:dec_Ul}. 
Hence we have
\begin{equation}\label{eq:h_Ul}
  h(U^l) \equiv \frac{(k-1) l^2}{4k} \pmod{\Z}.
\end{equation}

Since 
\begin{equation}\label{eq:sc_U0_act}
  U^p \boxtimes_{U^0} U^{i,l} = U^{i,l+p}
\end{equation}
by \eqref{eq:fusion_prod_U0},  
the isomorphism \eqref{eq:Uil_equivalence} implies the next lemma. 

\begin{lemma}\label{lem:stab-U0}
Let $0 \le i \le k-1$ and $0 \le p, l < 2k$.  
Then $U^p \boxtimes_{U^0} U^{i,l} = U^{i,l}$ 
if and only if one of the following conditions holds.

\textup{(i)} $p = 0$.

\textup{(ii)} $k$ is odd, $i = (k-1)/2$, and $p = k$.
\end{lemma}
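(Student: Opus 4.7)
The plan is to reduce the statement to the classification of equivalences of irreducible $U^0$-modules recorded in Theorem \ref{thm:irr_U0-mod}(2).

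First, I would apply \eqref{eq:sc_U0_act} to rewrite the hypothesis $U^p \boxtimes_{U^0} U^{i,l} = U^{i,l}$ as the equivalence $U^{i,l+p} \cong U^{i,l}$ of irreducible $U^0$-modules, where $l+p$ is read modulo $2k$. Then I would invoke Theorem \ref{thm:irr_U0-mod}(2), which says that two irreducible modules $U^{i,l}$ and $U^{i',l'}$ in the list are isomorphic exactly when either $(i,l) = (i',l')$ (as indices with $0 \le i' \le k-1$ and $l'$ taken mod $2k$), or $i' = k-1-i$ and $l' \equiv k+l \pmod{2k}$.

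Applying this dichotomy to the pair $(i,l+p) \sim (i,l)$, the first alternative forces $p \equiv 0 \pmod{2k}$, which under $0 \le p < 2k$ gives $p = 0$, i.e.\ case (i). The second alternative forces $i = k-1-i$, so $2i = k-1$, which requires $k$ to be odd and $i = (k-1)/2$; simultaneously $l+p \equiv k + l \pmod{2k}$ gives $p = k$, which is case (ii). Conversely, both (i) and (ii) plainly yield $U^{i,l+p} \cong U^{i,l}$, again by \eqref{eq:Uil_equivalence}.

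There is no real obstacle here: the lemma is a direct bookkeeping consequence of the simple current action \eqref{eq:sc_U0_act} and the classification of equivalences \eqref{eq:Uil_equivalence}. The only minor point to watch is the parity constraint coming from $2i = k-1$, which cleanly singles out the odd-$k$, middle-index case (ii).
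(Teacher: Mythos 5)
Your proof is correct and follows exactly the paper's route: the paper likewise derives the lemma immediately from the simple current action \eqref{eq:sc_U0_act} together with the isomorphism \eqref{eq:Uil_equivalence} of Theorem \ref{thm:irr_U0-mod}(2). Your case analysis fills in the same bookkeeping the paper leaves implicit.
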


The fusion rules of  $M^0_{(k-1)}$-modules 
are illustrated as follows.
We set $M^0 = M_{(k-1)}^0$ and $M^{i,j} = M_{(k-1)}^{i,j}$ for simplicity of notation.
The irreducible $M^0$-modules are denoted as 
$M^{i,j}$ by using $0 \le i \le k-1$ and $0 \le j < k-1$. 
There is another description of the irreducible $M^0$-modules. 
Take $0 \le q < 2(k-1)$ such that
$q \equiv i - 2j \pmod{2(k-1)}$. 
Let $\widetilde{M}^{i,q} = M^{i,j}$, which is the multiplicity 
of $V_{\Z \gamma_{k-1} + q \gamma_{k-1}/2(k-1)}$ 
in the decomposition \eqref{eq:Lsl2ki_dec} of $L_{\whmfsl_2}(k-1,i)$. 
Then the fusion product \eqref{eq:paraf_fusion} for $k-1$ in place of $k$ 
can be written as 
\begin{equation}\label{eq:paraf_fusion-new}
  \widetilde{M}^{i_1,q_1} \boxtimes_{M^0} \widetilde{M}^{i_2,q_2} 
  = \sum_{r \in R(i_1,i_2)} \widetilde{M}^{r,q_1+q_2}.
\end{equation}

The relationship between \eqref{eq:fusion_prod_U0} and \eqref{eq:paraf_fusion-new} 
is clear. 
Moreover, the isomorphisms $M^{i,j} \cong M^{k-1-i,j-i}$ 
and $M^{i,j} \circ \theta \cong M^{i,i-j}$ can be written as 
\[
  \widetilde{M}^{i,q} \cong \widetilde{M}^{k-1-i,k-1+q}, 
  \quad \widetilde{M}^{i,q} \circ \theta \cong \widetilde{M}^{i,-q}.
\]

It is known that 
a map defined by $\widetilde{M}^{i,q} \mapsto \eta^q \widetilde{M}^{i,q}$ 
with $\eta = \exp(2\pi\sqrt{-1}/(k-1))$ is compatible with the isomorphism 
$\widetilde{M}^{i,q} \cong \widetilde{M}^{k-1-i,k-1+q}$,  
and it induces an automorphism of the fusion algebra of $M^0$ of order $k-1$.

\section{Vertex operator (super)algebra $U_D$}\label{sec:UD}

In this section, we introduce a positive definite integral lattice $\Gamma_D$ and 
a vertex operator algebra or a vertex operator superalgebra 
$U_D$ for a $\Z_{2k}$-code $D$.

\subsection{Irreducible $U^0$-modules in $V_{N^\circ}$}\label{subsec:irred_mod_in_V_N-mod}

Let $L^{(k)}$ be the lattice as in \eqref{eq:lattice_L}.
We set
\[
  N=\{ \alpha \in L^{(k)} \mid \langle \alpha, \gamma_k \rangle = 0 \}, 
\]
where $\gamma_k\in L^{(k)}$ is as in \eqref{eq:gammas}.
We denote the dual lattice of $N$ by $N^\circ$.
The lattice $N$ is also considered in Section 4 of \cite{AYY2019}, where 
$L^{(k)}$ and $\gamma_k$ are denoted by $L$ and $\gamma$, respectively.
We show how the irreducible $U^0$-modules $U^{i,l}$ appear in 
the $V_N$-module $V_{N^\circ}$. 

For $0 \le j < k$ and $\bsa = \bsa(k) = (a_1,\ldots,a_k) \in \{0,1\}^k$, 
let 
\begin{equation*}
  N(j,\bsa(k)) = N + \delta_{\bsa(k)} - j \alpha_k + \frac{2j -  b_k}{2k} \gamma_k 
\end{equation*}
be a coset of $N$ in $N^\circ$, which is identical with 
$N(j,\bsa)$ of (4.4) in \cite{AYY2019}   
as $\delta_{\bsa(k)} = \frac{1}{2} \sum_{p=1}^k a_p \alpha_p$ and $\wt(\bsa(k)) = b_k$. 
We have
\begin{equation*}
  V_{L^{(k)} + \delta_{\bsa(k)}} = \bigoplus_{j=0}^{k-1} 
  V_{N(j,\bsa(k))} \otimes V_{\Z\gamma_k +(b_k - 2j)\gamma_k/2k} 
\end{equation*}
by (5.2) of \cite{AYY2019}. 
Let $0 \le l < 2k$ be such that $l \equiv b_k - 2j \pmod{2k}$. 
Then $j \equiv (b_k - l)/2 \pmod{k}$, and the multiplicity of 
$V_{\Z\gamma_k + l \gamma_k/2k}$ in $V_{L^{(k)} + \delta_{\bsa(k)}}$ is 
$V_{N((b_k - l)/2,\bsa(k))}$. 
Therefore, 
\begin{equation*}
  U^{i,l} = \{ v \in V_{N((b_k - l)/2,\bsa(k))} \mid  
  \omega_{(1)}^s v = h^{(s)}_{i_{s}+1, i_{s+1}+1} v, 1 \le s \le k-2 \}
\end{equation*}
with $i = i_{k-1}$ by \eqref{eq:VLa_dec-1} and \eqref{eq:def_Uil-bis}.
In the case where $a_1 = \cdots = a_{k-2} = 0$, 
we have $b_{k-1} = a_{k-1}$ and $b_k = a_{k-1} + a_k$. 
Thus the following lemma holds. 

\begin{lemma}\label{lem:Uil_in_VNia}
Let $0 \le i \le k-1$ and $0 \le l < 2k$.

\textup{(1)} 
Define $a_{k-1}$, $a_k \in \{ 0,1 \}$, and $0 \le j < k$ by the conditions
\begin{equation}\label{eq:cond_on_abj}
  i \equiv a_{k-1}, \ l \equiv a_{k-1}+a_k \pmod{2}, 
  \quad j \equiv (a_{k-1} + a_k - l)/2 \pmod{k}.
\end{equation}
Then
\begin{equation*}
  U^{i,l} = \{ v \in V_{N(j, (0,\ldots,0,a_{k-1},a_k))} 
  \mid \omega^s_{(1)} v = 0, 1 \le s \le k-3, \omega^{k-2}_{(1)} v = h^{(k-2)}_{1,i+1} v \}.
\end{equation*}

\textup{(2)} 
In the case $i = 0$, 
we have $a_{k-1} = 0$, $l \equiv a_k \pmod{2}$, and $j \equiv(a_k - l)/2 \pmod{k}$. 
In particular,
\begin{equation*}
  U^l = \{ v \in V_{N(j, (0,\ldots,0,a_k))} \mid \omega^s_{(1)} v = 0, 1 \le s \le k-2 \}.
\end{equation*}
\end{lemma}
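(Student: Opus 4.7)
The plan is to deduce the lemma directly by specializing the characterization of $U^{i,l}$ established immediately before the statement, namely
\[
  U^{i,l} = \bigl\{ v \in V_{N((b_k - l)/2,\, \bsa(k))} \bigm| \omega^s_{(1)} v = h^{(s)}_{i_s+1,\, i_{s+1}+1} v,\ 1 \le s \le k-2 \bigr\},
\]
with $i_{k-1} = i$, to the string $\bsa(k) = (0, \ldots, 0, a_{k-1}, a_k)$. First I would verify the bookkeeping: this choice gives $b_s = 0$ for $s \le k-2$, $b_{k-1} = a_{k-1}$, and $b_k = a_{k-1} + a_k$, so the parity constraints $i_{k-1} \equiv b_{k-1} \pmod 2$ and $l \equiv b_k \pmod 2$ become exactly the congruences in \eqref{eq:cond_on_abj}, and the coset index $(b_k - l)/2$ matches $j$ modulo $k$.

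The main step is to show that, inside $V_{N(j, \bsa(k))}$, imposing $\omega^s_{(1)} v = 0$ for $1 \le s \le k-3$ together with $\omega^{k-2}_{(1)} v = h^{(k-2)}_{1, i+1} v$ is equivalent to fixing $i_1 = \cdots = i_{k-2} = 0$ and $i_{k-1} = i$ in the decomposition \eqref{eq:V_Lk_delta-dec}. Since that decomposition already diagonalizes each $\omega^s_{(1)}$ with eigenvalue $h^{(s)}_{i_s+1, i_{s+1}+1}$ on the summand indexed by $(i_1, \ldots, i_k)$, the question reduces to identifying the zeros of $h^{(m)}_{r, s'}$ inside the Kac range $1 \le r \le m+1$, $1 \le s' \le m+2$. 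A direct calculation from the explicit formula shows $h^{(m)}_{r, s'} = 0$ precisely at $(r, s') = (1, 1)$ and $(m+1, m+2)$, i.e., $(i_s, i_{s+1}) = (0, 0)$ or $(s, s+1)$. The second alternative requires $i_s$ and $i_{s+1}$ to have opposite parities, but the parity constraint $i_s \equiv b_s = 0 \equiv b_{s+1} \equiv i_{s+1} \pmod 2$ for $s \le k-3$ rules it out. Hence only $(0, 0)$ survives at each step, forcing $i_1 = \cdots = i_{k-2} = 0$, after which $\omega^{k-2}_{(1)} v = h^{(k-2)}_{1, i_{k-1}+1} v$ pins $i_{k-1} = i$.

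Part (2) is an immediate specialization of (1) with $i = 0$: the parity condition forces $a_{k-1} = 0$, the formulas for $a_k$ and $j$ simplify to the stated ones, and since $h^{(k-2)}_{1, 1} = 0$ the condition at $s = k-2$ merges into the uniform statement $\omega^s_{(1)} v = 0$ for $1 \le s \le k-2$. The main obstacle I anticipate is the parity-based exclusion of the boundary solution $(s, s+1)$ to $h^{(s)}_{r, s'} = 0$; once that is in place, the rest is a direct translation from \eqref{eq:V_Lk_delta-dec} and \eqref{eq:def_Uil-bis}.
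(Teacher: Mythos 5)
Your proof is correct and follows essentially the same route as the paper, which derives the lemma by specializing the identity $U^{i,l} = \{ v \in V_{N((b_k - l)/2,\bsa(k))} \mid \omega^s_{(1)} v = h^{(s)}_{i_s+1,i_{s+1}+1} v,\ 1 \le s \le k-2 \}$ to $\bsa(k) = (0,\ldots,0,a_{k-1},a_k)$. Your explicit verification that the zeros of $h^{(m)}_{r,s'}$ in the Kac table occur only at $(1,1)$ and $(m+1,m+2)$, with the second excluded by parity, makes precise a step the paper leaves implicit, but it is the same argument.
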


In the assertion (2) of the above lemma, we have 
$N(j, (0,\ldots,0,a_k)) = N - l d/2k$,  
as $d = \gamma_k - k \alpha_k$.
Thus 
\begin{equation}\label{eq:Ul_in_Nl}
  U^l = \{ v \in V_{N - l d/2k} \mid \omega^s_{(1)} v = 0, 1 \le s \le k-2 \}.
\end{equation}

\subsection{$\Gamma_D$ and $U_D$}\label{subsec:gamma_D-U_D}

The arguments in this subsection are parallel to those in Section 7 of \cite{AYY2019}. 
For simplicity of notation, set 
\[
  \widetilde{N}^{(l)} = N - l d/2k, \quad 0 \le l < 2k.
\]
We can regard the index $l$ of $\widetilde{N}^{(l)}$ as $l \in \Z_{2k}$. 
Since $\la x, y \ra \in 2\Z$ and $\la x, d/2k \ra \in \Z$ for $x, y \in N$, 
and since $\la d, d \ra = 2(k-1)k$, the following lemma holds. 

\begin{lemma}\label{lem:inner_prod_Np_Nq}
Let $0 \le p, q < 2k$.

  \textup{(1)} 
  $\la \alpha, \beta \ra \in \frac{k-1}{2k} pq + \Z$ 
  for $\alpha \in \widetilde{N}^{(p)}$ and $\beta \in \widetilde{N}^{(q)}$. 

  \textup{(2)} 
  $\la \alpha, \alpha \ra \in \frac{k-1}{2k} p^2 + 2\Z$ 
  for $\alpha \in \widetilde{N}^{(p)}$. 
\end{lemma}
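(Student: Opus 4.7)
The plan is to reduce both statements to a direct bilinear expansion using the three facts the authors have just recalled, namely $\la x,y\ra\in 2\Z$ and $\la x,d/2k\ra\in\Z$ for $x,y\in N$, together with $\la d,d\ra=2(k-1)k$. No new ideas beyond bookkeeping are needed; the main thing to watch is the parity in part (2).

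For part (1), I would write $\alpha=x-pd/2k$ and $\beta=y-qd/2k$ with $x,y\in N$, and expand
\[
  \la\alpha,\beta\ra
  =\la x,y\ra-p\la y,d/2k\ra-q\la x,d/2k\ra+pq\la d/2k,d/2k\ra.
\]
The first three terms on the right lie in $\Z$ by the recalled facts, and
\[
  \la d/2k,d/2k\ra=\frac{\la d,d\ra}{4k^{2}}=\frac{2(k-1)k}{4k^{2}}=\frac{k-1}{2k},
\]
so $\la\alpha,\beta\ra\equiv\frac{k-1}{2k}pq\pmod{\Z}$, which is (1).

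For part (2), specialize the expansion to $\beta=\alpha$, $q=p$:
\[
  \la\alpha,\alpha\ra=\la x,x\ra-2p\la x,d/2k\ra+\frac{k-1}{2k}p^{2}.
\]
Here $\la x,x\ra\in 2\Z$ since $N\subset L^{(k)}$ and $\la\alpha_{i},\alpha_{j}\ra=2\delta_{i,j}$. The middle term requires a slight upgrade of the recalled fact: because $d=\gm_{k}-k\al_{k}$ and $\la x,\gm_{k}\ra=0$ for $x\in N$, we actually have $\la x,d\ra=-k\la x,\al_{k}\ra\in 2k\Z$, so $2p\la x,d/2k\ra=p\la x,d/k\ra=-p\la x,\al_{k}\ra\in 2\Z$. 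Hence both integer contributions are even, and $\la\alpha,\alpha\ra\in\frac{k-1}{2k}p^{2}+2\Z$, which is (2).

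The only step that is not purely mechanical is noticing the strengthening $\la x,d\ra\in 2k\Z$ (rather than merely $\la x,d/2k\ra\in\Z$) needed to obtain the congruence modulo $2\Z$ in part (2); this is the one subtlety worth flagging, and it follows immediately from the defining condition $\la x,\gm_{k}\ra=0$ of $N$ together with the evenness of $\la x,\al_{k}\ra$ in $L^{(k)}$.
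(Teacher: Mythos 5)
Your proof is correct and is exactly the computation the paper leaves implicit (the lemma is stated as an immediate consequence of the three recalled facts). The one "subtlety" you flag is not actually needed: since the cross term is $-2p\langle x,d/2k\rangle$ with $\langle x,d/2k\rangle\in\Z$, it is automatically in $2\Z$, and your "upgraded" fact $\langle x,d\rangle\in 2k\Z$ is in any case equivalent to, not stronger than, the recalled fact $\langle x,d/2k\rangle\in\Z$.
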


We fix a positive integer $\ell$. 
Define a coset $\widetilde{N}(\xi)$ of $N^\ell$ in $(N^\circ)^\ell$ by 
\begin{equation*}
  \widetilde{N}(\xi) 
  = \{ (x_1,\ldots,x_\ell) \mid x_r \in \widetilde{N}^{(\xi_r)}, 1 \le r \le \ell \} 
\end{equation*}
for $\xi = (\xi_1, \ldots, \xi_\ell) \in (\Z_{2k})^\ell$. 
Then $\widetilde{N}(\xi) + \widetilde{N}(\eta) = \widetilde{N}(\xi + \eta)$ 
for $\xi, \eta \in (\Z_{2k})^\ell$. 

For $\xi = (\xi_1, \ldots, \xi_\ell),\, \eta = (\eta_1,\ldots, \eta_\ell) \in \Z^\ell$, 
define an integer $\xi \cdot \eta$ by 
\[ 
  \xi \cdot \eta = \xi_1 \eta_1 + \cdots + \xi_\ell \eta_\ell, 
\]
and consider a $\Z$-bilinear map 
\begin{equation*}
  \Z^\ell \times \Z^\ell \to \Q/\Z; 
  \quad (\xi, \eta) \mapsto \frac{k-1}{2k} \xi \cdot \eta + \Z.
\end{equation*}

Since $\frac{k-1}{2k} \xi_r \eta_r + \Z$ depends only on $\xi_r$ and $\eta_r$ modulo $2k$, 
the above $\Z$-bilinear map induces a $\Z$-bilinear map 
\[
  (\Z_{2k})^\ell \times (\Z_{2k})^\ell \to \Q/\Z; 
  \quad (\xi, \eta) \mapsto \frac{k-1}{2k} \xi \cdot \eta + \Z, 
\]
where $\xi \cdot \eta = \xi_1 \eta_1 + \cdots + \xi_\ell \eta_\ell$ is considered for 
integers $\xi_r$, $\eta_r$ such that $0 \le \xi_r, \eta_r < 2k$, $1 \le r \le \ell$. 
The $\Z$-bilinear map is non-degenerate if $k$ is even, whereas it is degenerate 
with radical $\{ 0, k \}^\ell$ if $k$ is odd. 
The following lemma holds by Lemma \ref{lem:inner_prod_Np_Nq}.

\begin{lemma}\label{lem:inner_prod_Nxi_Neta}
Let $\xi, \eta \in (\Z_{2k})^\ell$.

  \textup{(1)} 
  $\la \alpha, \beta \ra \in \frac{k-1}{2k} \xi \cdot \eta + \Z$ 
  for $\alpha \in \widetilde{N}(\xi)$ and $\beta \in \widetilde{N}(\eta)$. 

  \textup{(2)} 
  $\la \alpha, \alpha \ra \in \frac{k-1}{2k} \xi \cdot \xi + 2\Z$ 
  for $\alpha \in \widetilde{N}(\xi)$. 
\end{lemma}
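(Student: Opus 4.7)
The plan is to reduce Lemma \ref{lem:inner_prod_Nxi_Neta} to the component-wise statement Lemma \ref{lem:inner_prod_Np_Nq} by exploiting the product structure of $\widetilde{N}(\xi)$.

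First I would unpack the definition: any $\alpha \in \widetilde{N}(\xi)$ is of the form $\alpha = (\alpha_1,\ldots,\alpha_\ell)$ with $\alpha_r \in \widetilde{N}^{(\xi_r)}$, and likewise $\beta = (\beta_1,\ldots,\beta_\ell)$ with $\beta_r \in \widetilde{N}^{(\eta_r)}$. Since the bilinear form on $(N^\circ)^\ell$ is the orthogonal sum of the forms on the factors, we have $\la \alpha,\beta \ra = \sum_{r=1}^\ell \la \alpha_r, \beta_r \ra$.

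For part (1), Lemma \ref{lem:inner_prod_Np_Nq}(1) applied to each pair $(\alpha_r, \beta_r)$ yields $\la \alpha_r, \beta_r \ra \in \frac{k-1}{2k}\xi_r \eta_r + \Z$, where $0 \le \xi_r, \eta_r < 2k$ are the chosen integer representatives. Summing over $r$ gives $\la \alpha, \beta \ra \in \frac{k-1}{2k}\,\xi \cdot \eta + \Z$, which is exactly the claim, since $\frac{k-1}{2k}\,\xi \cdot \eta + \Z$ is well defined in $\Q/\Z$ by the remark preceding the lemma. For part (2), apply Lemma \ref{lem:inner_prod_Np_Nq}(2) to each $\alpha_r$ to get $\la \alpha_r,\alpha_r \ra \in \frac{k-1}{2k}\xi_r^2 + 2\Z$, and sum over $r$; the $2\Z$-summands add to $2\Z$, producing $\la \alpha,\alpha \ra \in \frac{k-1}{2k}\,\xi \cdot \xi + 2\Z$.

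There is essentially no obstacle here; the only subtlety worth a sentence of commentary is well-definedness modulo $\Z$ (resp.\ $2\Z$) of the right-hand sides when $\xi_r,\eta_r$ are viewed as elements of $\Z_{2k}$ rather than integers. This is already handled in the paragraph introducing the bilinear map $(\Z_{2k})^\ell \times (\Z_{2k})^\ell \to \Q/\Z$ just above the lemma, so no additional argument is needed. Hence the proof is a one-line invocation of Lemma \ref{lem:inner_prod_Np_Nq} componentwise.
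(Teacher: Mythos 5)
Your proof is correct and matches the paper, which simply asserts that the lemma ``holds by Lemma \ref{lem:inner_prod_Np_Nq}'' --- i.e., exactly the componentwise reduction you spell out, using that the form on $(N^\circ)^\ell$ is the orthogonal sum of the forms on the factors. Nothing further is needed.
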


Let $(\xi | \eta) = \xi_1 \eta_1 + \cdots + \xi_\ell \eta_\ell \in \Z_{2k}$ 
be the standard inner product of  
$\xi = (\xi_1, \ldots, \xi_\ell)$ and $\eta = (\eta_1,\ldots, \eta_\ell) \in (\Z_{2k})^\ell$. 
Then $\frac{1}{2k} (\xi | \eta) + \Z$ makes sense, 
and
\begin{equation*}
  \frac{1}{2k} (\xi | \eta) + \Z = \frac{1}{2k} \xi \cdot \eta + \Z.
\end{equation*}
The inner product $(\xi | \eta)$ will be used in Section \ref{sec:rep_of_U_D}.

\begin{remark}
The Euclidean weight $\wt_{\mathrm E} (\xi)$ of 
$\xi = (\xi_1, \ldots, \xi_\ell) \in (\Z_{2k})^\ell$ 
is defined as 
\[
  \wt_{\mathrm E} (\xi) = \sum_{r=1}^\ell \min \{ \xi_r^2, (2k - \xi_r)^2 \} \in \Z,
\]
where $\xi_r$ are considered to be integers such that $0 \le \xi_r < 2k$, 
$1 \le r \le \ell$. 
Note that $\frac{1}{2k} \wt_{\mathrm E} (\xi) + 2\Z = \frac{1}{2k} \xi \cdot \xi + 2\Z$. 
The Euclidean weight was used in \cite{KLY2001}. 
\end{remark}

Let $D$ be a $\Z_{2k}$-code of length $\ell$, that is, 
$D$ is an additive subgroup of $(\Z_{2k})^\ell$. 
Set
\begin{equation*}
  \Gamma_D = \bigcup_{\xi \in D} \widetilde{N}(\xi),  
\end{equation*}
which is a sublattice of $(N^\circ)^{\ell}$.  
We consider two cases, namely,  

\medskip\noindent
{\bfseries Case A.} \  
$\frac{k-1}{2k} \xi \cdot \xi \in 2\Z$ for all $\xi \in D$.

\medskip\noindent
{\bfseries Case B.} \ 
$\frac{k-1}{2k} \xi \cdot \eta \in \Z$ for all $\xi, \eta \in D$, and 
$\frac{k-1}{2k} \xi \cdot \xi \in 2\Z + 1$ for some $\xi \in D$.

\medskip
Lemma \ref{lem:inner_prod_Nxi_Neta} implies the following lemma. 

\begin{lemma}\label{lem:GD}
\textup{(1)} $\Gamma_D$ is a positive definite even lattice if and only if 
$D$ is in Case A.

\textup{(2)} $\Gamma_D$ is a positive definite odd lattice if and only if 
$D$ is in Case B. 
\end{lemma}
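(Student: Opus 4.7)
The plan is to combine Lemma \ref{lem:inner_prod_Nxi_Neta} with the structural fact that $\Gamma_D = \bigcup_{\xi \in D} \widetilde{N}(\xi)$ sits inside the positive definite ambient space $(N^\circ)^\ell$. First I would check that $\Gamma_D$ is a sublattice of $(N^\circ)^\ell$: since $D$ is an additive subgroup of $(\Z_{2k})^\ell$ and $\widetilde{N}(\xi) + \widetilde{N}(\eta) = \widetilde{N}(\xi+\eta)$, the set $\Gamma_D$ is closed under addition and negation and contains the full-rank sublattice $N^\ell$. Positive definiteness is automatic from the inclusion $\Gamma_D \subset (N^\circ)^\ell$, so what remains in both parts is to decide when $\Gamma_D$ is integral and, within that, whether it is even or odd.

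The core step is then to translate integrality and parity of $\la \cdot\,,\cdot \ra$ on $\Gamma_D$ into arithmetic conditions on $D$ via Lemma \ref{lem:inner_prod_Nxi_Neta}. By part (1) of that lemma, $\la \alpha, \beta \ra \in \Z$ for all $\alpha \in \widetilde{N}(\xi)$ and $\beta \in \widetilde{N}(\eta)$ with $\xi, \eta \in D$ is equivalent to $\frac{k-1}{2k}\,\xi \cdot \eta \in \Z$ for all such pairs, and by part (2), $\la \alpha, \alpha \ra \in 2\Z$ for all $\alpha \in \widetilde{N}(\xi)$ is equivalent to $\frac{k-1}{2k}\,\xi \cdot \xi \in 2\Z$.

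For assertion (1), if $D$ is in Case A then each quadratic value $\frac{k-1}{2k}\,\xi \cdot \xi$ lies in $2\Z$, and integrality of mixed pairings follows by polarization inside the additive group $D$:
\begin{equation*}
  \frac{k-1}{2k}\,\xi \cdot \eta
  = \frac{1}{2}\Bigl( \frac{k-1}{2k}(\xi+\eta) \cdot (\xi+\eta)
  - \frac{k-1}{2k}\,\xi \cdot \xi - \frac{k-1}{2k}\,\eta \cdot \eta \Bigr) \in \Z,
\end{equation*}
so $\Gamma_D$ is even. Conversely, if $\Gamma_D$ is even then part (2) of the lemma applied to any single element of $\widetilde{N}(\xi)$ recovers $\frac{k-1}{2k}\,\xi \cdot \xi \in 2\Z$ for each $\xi \in D$. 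Assertion (2) is then analogous: Case B directly supplies integrality of all pairings together with the existence of some $\xi \in D$ with $\frac{k-1}{2k}\,\xi \cdot \xi \in 2\Z + 1$, so $\Gamma_D$ is integral but not even; conversely, integrality of $\Gamma_D$ forces $\frac{k-1}{2k}\,\xi \cdot \eta \in \Z$ for all $\xi, \eta \in D$ by part (1), and the presence of an element of odd norm forces $\frac{k-1}{2k}\,\xi \cdot \xi \in 2\Z+1$ for some $\xi$ by part (2).

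I expect no real obstacles here; the proof is essentially a bookkeeping exercise on top of Lemma \ref{lem:inner_prod_Nxi_Neta}. The only subtlety worth highlighting is the polarization step in Case A, which is what makes evenness alone — without a separately stated integrality hypothesis — sufficient to produce an even integral lattice.
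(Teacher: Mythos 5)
Your proof is correct and follows the same route the paper intends: the paper simply asserts that Lemma \ref{lem:inner_prod_Nxi_Neta} implies the result, and your write-up supplies exactly the bookkeeping (translation of integrality/parity via that lemma, plus the polarization step showing that the diagonal condition in Case A already forces integrality of the mixed pairings) that the paper leaves implicit.
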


In Case A, $V_{\Gamma_D}$ is a vertex operator algebra. 
In Case B, set 
\[
  D^0 = \{ \xi \in D \mid {\textstyle \frac{k-1}{2k} \xi \cdot \xi \in 2\Z} \}, \quad
  D^1 = \{ \xi \in D \mid {\textstyle \frac{k-1}{2k} \xi \cdot \xi \in 2\Z + 1} \}.
\]
Then $D^0$ is a subgroup of $D$, and $D = D^0 \cup D^1$ is 
the coset decomposition of $D$ by $D^0$. 
Let $\Gamma_{D^p} =  \bigcup_{\xi \in D^p} \widetilde{N}(\xi)$, $p = 0,1$.
Then 
$V_{\Gamma_D} = V_{\Gamma_{D^0}} \oplus V_{\Gamma_{D^1}}$ 
is a vertex operator superalgebra.  

We have 
$V_{\widetilde{N}(\xi)} 
= V_{\widetilde{N}^{(\xi_1)}} \otimes \cdots \otimes V_{\widetilde{N}^{(\xi_\ell)}} 
\subset (V_{N^\circ})^{\otimes \ell}$, and 
$V_{\Gamma_D} = \bigoplus_{\xi \in D} V_{\widetilde{N}(\xi)}$. 
Let 
\begin{equation*}
  U_\xi = \{ v \in V_{\widetilde{N}(\xi)} \mid (\om_{S^{\otimes \ell}})_{(1)} v = 0 \},
\end{equation*}
where 
$\om_{S^{\otimes \ell}}$ is the conformal vector of the vertex operator subalgebra 
$S^{\otimes \ell}$ of $(V_N)^{\otimes \ell}$ with 
$S = L(c_1,0) \otimes \cdots \otimes L(c_{k-2}, 0)$.  
Then 
$U_\xi = U^{\xi_1} \otimes \cdots \otimes U^{\xi_\ell}$ 
by \eqref{eq:Ul_in_Nl}. 
In particular, 
$U_\0 = (U^0)^{\otimes \ell}$ for the zero codeword $\0 = (0,\ldots,0)$. 
We see from \eqref{eq:sc_of_U0} that 
the set of equivalence classes of simple current $U_\0$-modules is 
\begin{equation*}
  \SC{U_\0} = \{ U_\xi \mid \xi \in (\Z_{2k})^\ell \} \quad \text{with} \quad 
  U_\xi \boxtimes_{U_\0} U_{\xi'} = U_{\xi+\xi'}. 
\end{equation*}
The conformal weight $h(U_\xi)$ of $U_\xi$ is 
\begin{equation*}
  h(U_\xi) \equiv \frac{k-1}{4k} \xi \cdot \xi \pmod{\Z}
\end{equation*}
by \eqref{eq:h_Ul}. 
Hence $h(U_\xi) \in \Z$ for $\xi \in D$ if $D$ is in Case A.

The next proposition follows from Theorems \ref{thm:U0} and \ref{thm:irr_U0-mod}.

\begin{proposition}\label{prop:U_0}
$U_\0 =  (U^{0})^{\otimes \ell}$ is a simple, self-dual, rational, and $C_2$-cofinite 
vertex operator algebra of CFT-type with central charge $3 \ell (k-1)/(k+1)$. 
Any irreducible $U_\0$-module except for $U_\0$ itself has positive conformal weight.
\end{proposition}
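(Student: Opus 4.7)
The proposition follows essentially formally from the properties of $U^0$ already established in Theorems \ref{thm:U0} and \ref{thm:irr_U0-mod}, combined with standard results about how the listed properties behave under tensor products of vertex operator algebras. The plan is therefore to reduce each assertion to its one-factor analogue for $U^0$.

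First, I would invoke Theorem \ref{thm:U0}(1), which gives that $U^0$ is simple, self-dual, rational, $C_2$-cofinite, of CFT-type, with central charge $3(k-1)/(k+1)$. Simplicity and CFT-type of a tensor product of VOAs with these properties is immediate from \cite{FHL1993}; self-duality follows since the contragredient of a tensor product is the tensor product of contragredients; rationality of a tensor product of rational VOAs is a standard result; and $C_2$-cofiniteness is preserved under tensor products (since $C_2(V_1 \otimes V_2) \supseteq C_2(V_1) \otimes V_2 + V_1 \otimes C_2(V_2)$, so the quotient is a quotient of $(V_1/C_2(V_1)) \otimes (V_2/C_2(V_2))$). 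Iterating $\ell$ times gives all the structural properties for $U_{\mathbf{0}} = (U^0)^{\otimes \ell}$. The central charge is additive under tensor products, yielding $\ell \cdot 3(k-1)/(k+1) = 3\ell(k-1)/(k+1)$.

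For the statement about conformal weights, I would use the classification of irreducible modules for tensor products of rational $C_2$-cofinite VOAs: every irreducible $U_{\mathbf{0}}$-module is isomorphic to a tensor product $W_1 \otimes \cdots \otimes W_\ell$ where each $W_r$ is an irreducible $U^0$-module (this is the tensor-product theorem for rational VOAs, applicable since $U^0$ is rational and $C_2$-cofinite by Theorem \ref{thm:U0}). The conformal weight of such a tensor product equals $\sum_{r=1}^\ell h(W_r)$. By Theorem \ref{thm:irr_U0-mod}(4), each summand is $\ge 0$, with equality exactly when $W_r \cong U^0$. Thus the total weight is positive unless every factor equals $U^0$, which is precisely the module $U_{\mathbf{0}}$ itself.

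I do not anticipate a genuine obstacle here: every ingredient is either quoted from Theorems \ref{thm:U0} and \ref{thm:irr_U0-mod} or is a standard tensor-product fact from the VOA literature. The only point that requires a citation rather than a one-line observation is the classification of irreducibles for tensor products; for this I would cite the tensor-product theorem for rational $C_2$-cofinite VOAs (e.g.\ from work of Dong--Mason--Zhu or \cite{FHL1993}). The proof should therefore be a short paragraph assembling these citations.
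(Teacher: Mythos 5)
Your proposal is correct and follows the same route as the paper, which simply states that the proposition follows from Theorems \ref{thm:U0} and \ref{thm:irr_U0-mod}; you have merely filled in the standard tensor-product facts (preservation of simplicity, self-duality, rationality, $C_2$-cofiniteness, additivity of central charge, and the classification of irreducibles of a tensor product) that the authors leave implicit. No gaps.
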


Let $U_D$ be the commutant of $S^{\otimes \ell}$ in $V_{\Gamma_D}$. 
Then
\begin{equation*}
  U_D = \{ v \in V_{\Gamma_D} \mid (\om_{S^{\otimes \ell}})_{(1)} v = 0\} 
  = \bigoplus_{\xi \in D} U_\xi, 
\end{equation*}
so $U_D$ is a $D$-graded simple current extension of $U_\0$. 
We have the next theorem. 
\begin{theorem}\label{thm:U_D}
\textup{(1)} 
If $D$ is in Case A, then $U_D$ is a simple, self-dual, rational, and $C_2$-cofinite vertex 
operator algebra of CFT-type with central charge $3 \ell (k-1)/(k+1)$. 

\textup{(2)} 
If $D$ is in Case B, 
then $U_D = U_{D^0} \oplus U_{D^1}$ is a simple vertex operator superalgebra. 
The even part $U_{D^0}$ and the odd part $U_{D^1}$ are given by 
$U_{D^p} = \bigoplus_{\xi \in D^p} U_\xi$, $p = 0,1$, 
and $h(M_{D^1}) \in \Z + 1/2$.  
\end{theorem}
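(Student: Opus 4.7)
The plan is to realize $U_D$ as a $D$-graded simple current extension of $U_\0=(U^0)^{\otimes\ell}$ inside $V_{\Gamma_D}$ and then transfer the good properties of $U_\0$ via the standard theory of simple current extensions. The decomposition $V_{\Gamma_D}=\bigoplus_{\xi\in D}V_{\widetilde{N}(\xi)}$, together with the tensor factorization of each $V_{\widetilde{N}(\xi)}$ and the characterization \eqref{eq:Ul_in_Nl} of $U^l$ inside $V_{\widetilde{N}^{(l)}}$, immediately yields $U_D=\bigoplus_{\xi\in D}U_\xi$ with each $U_\xi=U^{\xi_1}\otimes\cdots\otimes U^{\xi_\ell}$ an irreducible simple current $U_\0$-module, so the work reduces to verifying that the structure induced from $V_{\Gamma_D}$ is of the asserted type.

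For part (1), Lemma \ref{lem:GD}(1) gives that $\Gamma_D$ is a positive definite even lattice in Case A, hence $V_{\Gamma_D}$ is a vertex operator algebra and its $S^{\otimes\ell}$-commutant $U_D$ inherits one. The Case A condition combined with $h(U_\xi)\equiv\frac{k-1}{4k}\xi\cdot\xi\pmod{\Z}$ forces $h(U_\xi)\in\Z$ for every $\xi\in D$, placing $U_D$ in the scope of \cite[Theorem 3.12]{CKL2015}; the resulting simple vertex operator algebra structure must agree with the one induced from $V_{\Gamma_D}$ by the uniqueness of simple current extensions (cf.\ \cite[Proposition 3.8]{SY2003}), so $U_D$ is simple. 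Self-duality, rationality, $C_2$-cofiniteness and CFT-type of $U_\0$, recorded in Proposition \ref{prop:U_0}, pass to the extension $U_D$ by the standard results on simple current extensions in \cite{Yamauchi2004, Carnahan2014, HKL2015}. The central charge is preserved since $U_\0$ and $U_D$ share the same conformal vector, so it equals $3\ell(k-1)/(k+1)$.

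For part (2), Lemma \ref{lem:GD}(2) says $\Gamma_D$ is a positive definite odd lattice, so $V_{\Gamma_D}=V_{\Gamma_{D^0}}\oplus V_{\Gamma_{D^1}}$ is a vertex operator superalgebra, and the $S^{\otimes\ell}$-commutant respects this $\Z_2$-grading, yielding $U_D=U_{D^0}\oplus U_{D^1}$ with $U_{D^p}=\bigoplus_{\xi\in D^p}U_\xi$. The subcode $D^0$ satisfies the Case A condition, so part (1) applied to $D^0$ shows $U_{D^0}$ is a simple vertex operator algebra, and the coset $D^1$ supplies a simple $U_{D^0}$-module $U_{D^1}$, whence $U_D$ is a simple vertex operator superalgebra. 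For $\xi\in D^1$ the relation $\frac{k-1}{2k}\xi\cdot\xi\in 2\Z+1$ gives $\frac{k-1}{4k}\xi\cdot\xi\in\Z+\frac{1}{2}$ and hence $h(U_\xi)\in\Z+\frac{1}{2}$ uniformly, which yields the claimed half-integrality of the conformal weight on the odd part.

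The principal technical point throughout is to ensure that the abstract simple current extension matches the concrete commutant construction inside $V_{\Gamma_D}$; this is handled by the uniqueness of the extending vertex operator (super)algebra structure on a direct sum of simple current modules. The remaining steps are essentially bookkeeping on the central charge and on which property-preservation result from the simple current extension literature to invoke for each of self-duality, rationality, $C_2$-cofiniteness and CFT-type.
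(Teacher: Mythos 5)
Your proposal is correct and follows essentially the same route as the paper, which states the theorem as an immediate consequence of the preceding setup: $U_D=\bigoplus_{\xi\in D}U_\xi$ is a $D$-graded simple current extension of $U_\0$, Lemma \ref{lem:GD} and the congruence $h(U_\xi)\equiv\frac{k-1}{4k}\xi\cdot\xi\pmod{\Z}$ sort the two cases, and Proposition \ref{prop:U_0} plus the standard simple current extension results (\cite{Yamauchi2004, CKL2015}) transfer the listed properties. Your added remarks on matching the abstract extension with the commutant inside $V_{\Gamma_D}$ and on the half-integrality of $h(U_\xi)$ for $\xi\in D^1$ are exactly the bookkeeping the paper leaves implicit.
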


\section{Representations of $U_D$}\label{sec:rep_of_U_D}

In this section, we construct all the irreducible $\chi$-twisted $U_D$-modules for 
$\chi \in D^*$ in $V_{(N^\circ)^\ell}$, and classify them, where $D^* = \Hom(D, \C^\times)$. 
We argue as in Sections 8 and 9 of \cite{AYY2019}. 

\subsection{Irreducible $U_D$-modules: Case A}\label{subsec:irred_U_D-modules_A}

Let $D$ be a $\Z_{2k}$-code of length $\ell$ 
in Case A of Section \ref{subsec:gamma_D-U_D}. 
Let $b_{U^0} : \SC{U^0} \times \Irr(U^0) \to \Q/\Z$ be a map defined by 
\begin{equation*}
  b_{U^0}(U^p, U^{i,l}) = h(U^p \boxtimes_{U^0} U^{i,l}) - h(U^p) - h(U^{i,l}) + \Z
\end{equation*} 
for $0 \le i \le k-1$ and $0 \le p, l < 2k$. 
Since $h(U^{i,l}) \equiv h(X(i,0,l)) \pmod{\Z}$ 
by \eqref{eq:Uil_as_U0_module},  
we obtain by using \eqref{eq:h_Ul} and \eqref{eq:sc_U0_act} that 
\begin{equation}\label{eq:b_U0}
  b_{U^0}(U^p, U^{i,l}) = p((k-1)l  - ki)/2k + \Z.
\end{equation}

Any irreducible $U_\0$ module is of the form
\begin{equation*}
  U_{\mu,\nu} = U^{\mu_1,\nu_1} \otimes \cdots \otimes U^{\mu_\ell,\nu_\ell}
\end{equation*}
for some $\mu = (\mu_1,\ldots,\mu_\ell)$ with $0 \le \mu_r \le k-1$, $1 \le r \le \ell$, 
and $\nu = (\nu_1,\ldots, \nu_\ell) \in (\Z_{2k})^\ell$.  
We have $U_{\0,\xi} = U_\xi$, and 
\begin{equation*}
  U_\xi \boxtimes_{U_\0} U_{\mu,\nu} = U_{\mu,\nu + \xi}.
\end{equation*}
by \eqref{eq:sc_U0_act}.
Define a map $b_{U_\0} : \SC{U_\0} \times \Irr(U_\0) \to \Q/\Z$ by  
\begin{equation*}
  b_{U_\0}(U_\xi, U_{\mu,\nu}) = 
  h(U_\xi \boxtimes_{U_\0} U_{\mu,\nu}) - h(U_\xi) - h(U_{\mu,\nu}) + \Z.
\end{equation*}
Then it follows from \eqref{eq:b_U0} that 
\begin{equation}\label{eq:b_U_0-2}
  b_{U_\0}(U_\xi,U_{\mu,\nu}) = \frac{1}{2k}(\xi | (k-1)\nu - k \mu) + \Z,
\end{equation}
where $(\, \cdot \,|\, \cdot \,)$ is the standard inner product on $(\Z_{2k})^\ell$. 
Although each entry $\mu_r$ of $\mu$ is an integer such that $0 \le \mu_r \le k-1$, 
we can treat it as an element of $\Z_{2k}$ on the right hand side of \eqref{eq:b_U_0-2}. 
Since $U_\eta = U_{\0,\eta}$ for $\eta \in (\Z_{2k})^\ell$, this in particular implies that
\begin{equation*}
  b_{U_\0}(U_\xi,U_\eta) = \frac{k-1}{2k}(\xi | \eta) + \Z.
\end{equation*}

Let $D^\perp = \{ \eta \in (\Z_{2k})^\ell \mid (D | \eta) = 0\}$.
Then $\abs{D} \abs{D^\perp} = \abs{(\Z_{2k})^\ell}$, as $(\, \cdot \,|\, \cdot \,)$ 
is a non-degenerate bilinear form on $(\Z_{2k})^\ell$. 
Consider a map
\begin{equation*}
  \chi_{U_{\mu, \nu}} : D \to \C^\times; \quad 
  \xi \mapsto \exp(2\pi\sqrt{-1} b_{U_\0}(U_\xi, U_{\mu, \nu})).
\end{equation*}
We have 
\begin{equation}\label{eq:char_of_D-2}
  \chi_{U_{\mu, \nu}}(\xi) = \exp(2\pi\sqrt{-1} (\xi | (k-1)\nu - k \mu)/2k)
\end{equation}
by \eqref{eq:b_U_0-2}. 
Hence $\chi_{U_{\mu, \nu}} \in D^*$. 

\begin{lemma}\label{lem:linear_char_of_D}
\textup{(1)} 
$\chi_{U_{\mu, \nu}} = 1$; the principal character of $D$ if and only if 
$(k-1)\nu - k \mu \in D^\perp$.

\textup{(2)} 
For any $\chi \in D^*$, 
there exists $U_{\mu, \nu} \in \Irr(U_\0)$ such that $\chi = \chi_{U_{\mu, \nu}}$. 
\end{lemma}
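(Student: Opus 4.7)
\textbf{Plan for Lemma \ref{lem:linear_char_of_D}.}

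For part (1), I would read the claim directly off \eqref{eq:char_of_D-2}. By definition, $\chi_{U_{\mu,\nu}}$ is the principal character of $D$ precisely when $\exp(2\pi\sqrt{-1}(\xi \mid (k-1)\nu - k\mu)/2k) = 1$ for every $\xi \in D$, which is equivalent to $(\xi \mid (k-1)\nu - k\mu) \equiv 0 \pmod{2k}$ for all $\xi \in D$, and this is by definition $(k-1)\nu - k\mu \in D^\perp$.

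For part (2), I would proceed in three steps. First, I would invoke the standard fact that every character of the subgroup $D$ extends to a character of the ambient finite abelian group $(\Z_{2k})^\ell$ (because $\C^\times$ is a divisible abelian group). Second, I would parametrize all characters of $(\Z_{2k})^\ell$ by the standard inner product: the pairing $(\, \cdot \, | \, \cdot \,)$ on $(\Z_{2k})^\ell$ is non-degenerate, since pairing a nonzero $\tau$ with the appropriate standard basis vector recovers a nonzero coordinate of $\tau$. Hence the map $\tau \mapsto \bigl(\eta \mapsto \exp(2\pi\sqrt{-1}(\eta \mid \tau)/2k)\bigr)$ is an isomorphism from $(\Z_{2k})^\ell$ onto its character group. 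Thus every $\chi \in D^*$ is the restriction to $D$ of $\eta \mapsto \exp(2\pi\sqrt{-1}(\eta \mid \tau)/2k)$ for some $\tau \in (\Z_{2k})^\ell$.

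Third, and this is the step that carries the real content, I would show that any such $\tau$ can be written as $(k-1)\nu - k\mu$ with $\mu = (\mu_1,\ldots,\mu_\ell)$ satisfying $0 \le \mu_r \le k-1$ and $\nu \in (\Z_{2k})^\ell$. This reduces to the one-variable statement that $(k-1)\Z_{2k} - k\{0,1,\ldots,k-1\} = \Z_{2k}$, which I would verify by splitting on the parity of $k$: if $k$ is even, then $\gcd(k-1,2k)=1$, so $(k-1)\nu$ already runs over all of $\Z_{2k}$ and I may take $\mu_r = 0$; if $k$ is odd, then $(k-1)\Z_{2k}$ consists exactly of the even residues, while $k\mu_r \equiv 0$ or $k \pmod{2k}$ according to the parity of $\mu_r$, and since $\{0,1,\ldots,k-1\}$ contains both parities (for $k \ge 2$), the difference $(k-1)\nu_r - k\mu_r$ reaches every residue mod $2k$. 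The main obstacle is this parity bookkeeping in the odd case, but it is only mildly subtle.

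Having produced $\mu$ and $\nu$ with $(k-1)\nu - k\mu = \tau$, formula \eqref{eq:char_of_D-2} yields $\chi_{U_{\mu,\nu}}(\xi) = \exp(2\pi\sqrt{-1}(\xi \mid \tau)/2k) = \chi(\xi)$ for every $\xi \in D$, which completes part (2).
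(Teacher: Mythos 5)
Your proposal is correct and follows essentially the same route as the paper: part (1) read off from \eqref{eq:char_of_D-2}, and part (2) via surjectivity of $(\mu,\nu)\mapsto(k-1)\nu-k\mu$ onto $(\Z_{2k})^\ell$ combined with non-degeneracy of the standard pairing (the paper leaves the character-extension step and the parity case analysis implicit, which you correctly supply).
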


\begin{proof}
The assertion (1) is a consequence of  \eqref{eq:char_of_D-2} 
and the definition of $D^\perp$. 
For any $0 \le p < 2k$, we have 
$p \equiv (k-1)l - ki \pmod{2k}$ for some $0 \le i \le k-1$ and $0 \le l < 2k$. 
Hence for any $\eta \in (\Z_{2k})^\ell$, there are 
$\mu = (\mu_1,\ldots,\mu_\ell)$ with $0 \le \mu_r \le k-1$, $1 \le r \le \ell$, 
and $\nu \in (\Z_{2k})^\ell$ such that 
$\eta = (k-1)\nu - k \mu$. 
Since $(\, \cdot \,|\, \cdot \,)$ is non-degenerate on $(\Z_{2k})^\ell$, 
the assertion (2) holds. 
\end{proof}

We consider a coset 
\begin{equation*}
  N(\eta, \delta^{(1)}, \delta^{(2)}) 
  = \{ (x_1, \ldots, x_\ell) \mid x_r \in N(\eta_r, (0, \ldots, 0, d^{(1)}_r, d^{(2)}_r)), 
  1 \le r \le \ell \} 
\end{equation*}
of $N^\ell$ in $(N^\circ)^\ell$ 
for $\eta = (\eta_1, \ldots, \eta_\ell) \in (\Z_k)^\ell$ and 
$\delta^{(s)} = (d^{(s)}_1,\ldots,d^{(s)}_\ell) \in \{0,1\}^\ell$, $s = 1,2$. 
The next proposition holds by Lemma \ref{lem:Uil_in_VNia}.

\begin{proposition}\label{prop:irred_U_0-mod_in_coset}
Let $\mu = (\mu_1,\ldots,\mu_\ell)$ with $0 \le \mu_r \le k-1$, $1 \le r \le \ell$, 
and let $\nu = (\nu_1, \ldots, \nu_\ell) \in (\Z_{2k})^\ell$. 
Define $d^{(1)}_r$, $d^{(2)}_r \in \{ 0,1 \}$, and $0 \le \eta_r < k$ by the 
conditions
\begin{equation*}
  \mu_r \equiv d^{(1)}_r, \  
  \nu_r \equiv d^{(1)}_r + d^{(2)}_r \pmod{2}, \quad 
  \eta_r \equiv (d^{(1)}_r + d^{(2)}_r - \nu_r)/2 \pmod{k}
\end{equation*}
for $1 \le r \le \ell$. 
Then $V_{N(\eta, \delta^{(1)}, \delta^{(2)})}$ contains 
the irreducible $U_\0$-module $U_{\mu, \nu}$. 
\end{proposition}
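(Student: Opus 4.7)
The plan is to reduce the proposition to a factor-by-factor application of Lemma \ref{lem:Uil_in_VNia}(1), exploiting the fact that everything in sight decomposes as a tensor product over the $\ell$ coordinates. First, I would verify that the prescribed data $(d^{(1)}_r,d^{(2)}_r,\eta_r)$ is well defined. Since $0\le \mu_r\le k-1$, the congruence $\mu_r\equiv d^{(1)}_r\pmod 2$ uniquely pins down $d^{(1)}_r\in\{0,1\}$; then $d^{(2)}_r\in\{0,1\}$ is determined by $\nu_r\equiv d^{(1)}_r+d^{(2)}_r\pmod 2$. This parity condition makes $d^{(1)}_r+d^{(2)}_r-\nu_r$ even, so $\eta_r\equiv(d^{(1)}_r+d^{(2)}_r-\nu_r)/2\pmod k$ with $0\le\eta_r<k$ is well defined.

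Next, I would match these parameters against the hypotheses of Lemma \ref{lem:Uil_in_VNia}(1): taking $i=\mu_r$, $l=\nu_r$, $a_{k-1}=d^{(1)}_r$, $a_k=d^{(2)}_r$, and $j=\eta_r$, the conditions in \eqref{eq:cond_on_abj} are exactly the ones defining our $d^{(1)}_r,d^{(2)}_r,\eta_r$. The lemma then yields, for each $r$,
\begin{equation*}
  U^{\mu_r,\nu_r}=\{v\in V_{N(\eta_r,(0,\ldots,0,d^{(1)}_r,d^{(2)}_r))}\mid \omega^s_{(1)}v=0,\ 1\le s\le k-3,\ \omega^{k-2}_{(1)}v=h^{(k-2)}_{1,\mu_r+1}v\},
\end{equation*}
and in particular $U^{\mu_r,\nu_r}\subset V_{N(\eta_r,(0,\ldots,0,d^{(1)}_r,d^{(2)}_r))}$ inside $V_{N^\circ}$.

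Finally, I would pass from factors to the tensor product. Because $N^\ell\subset (N^\circ)^\ell$ is a direct sum of $\ell$ copies of $N\subset N^\circ$, the coset $N(\eta,\delta^{(1)},\delta^{(2)})$ is by definition the Cartesian product of the $N(\eta_r,(0,\ldots,0,d^{(1)}_r,d^{(2)}_r))$, and consequently
\begin{equation*}
  V_{N(\eta,\delta^{(1)},\delta^{(2)})}
  =V_{N(\eta_1,(0,\ldots,0,d^{(1)}_1,d^{(2)}_1))}\otimes\cdots\otimes V_{N(\eta_\ell,(0,\ldots,0,d^{(1)}_\ell,d^{(2)}_\ell))}
\end{equation*}
as a subspace of $(V_{N^\circ})^{\otimes\ell}$. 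Tensoring the $\ell$ inclusions from the previous step gives $U_{\mu,\nu}=U^{\mu_1,\nu_1}\otimes\cdots\otimes U^{\mu_\ell,\nu_\ell}\subset V_{N(\eta,\delta^{(1)},\delta^{(2)})}$, which is the desired containment.

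There is no real obstacle here; the proposition is a formal componentwise consequence of Lemma \ref{lem:Uil_in_VNia}(1) together with the standard identification of a lattice vertex operator algebra on an orthogonal direct sum of lattices with the tensor product of the factor VOAs. The only care needed is bookkeeping with the parity conditions so that the product coset $N(\eta,\delta^{(1)},\delta^{(2)})$ matches the product of the cosets supplied by the single-factor lemma.
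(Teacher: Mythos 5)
Your proof is correct and follows exactly the route the paper intends: the paper simply asserts that the proposition "holds by Lemma \ref{lem:Uil_in_VNia}," and your argument fills in the same componentwise application of Lemma \ref{lem:Uil_in_VNia}(1) together with the identification of $V_{N(\eta,\delta^{(1)},\delta^{(2)})}$ with the tensor product of the factor spaces. Nothing further is needed.
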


Let $0 \le i \le k-1$ and $0 \le p, l < 2k$. 
Define $a_{k-1}$, $a_k \in \{ 0,1 \}$, and $0 \le j < k$ by the conditions 
\eqref{eq:cond_on_abj}. 
Then
\[
  \langle \alpha, \beta \rangle  \in p((k-1)l - ki)/2k + \Z
\] 
for $\alpha \in \widetilde{N}^{(p)}$ and $\beta \in N(j, (0,\ldots,0,a_{k-1}, a_k))$.  
Thus the the following lemma holds by \eqref{eq:b_U_0-2}.

\begin{lemma}\label{inner_product_cosets}
Let $\mu$, $\nu$, $\eta$, $\delta^{(1)}$, and $\delta^{(2)}$ be as in 
Proposition \ref{prop:irred_U_0-mod_in_coset}, and let $\xi \in (\Z_{2k})^\ell$. 
Then $\la x, y \ra \in b_{U_\0}(U_\xi,U_{\mu,\nu})$ 
for $x \in \widetilde{N}(\xi)$ and $y \in N(\eta, \delta^{(1)}, \delta^{(2)})$.
\end{lemma}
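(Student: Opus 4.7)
The plan is to combine the single-coordinate inner product identity stated immediately before the lemma with the componentwise structure of $\widetilde{N}(\xi)$ and $N(\eta,\delta^{(1)},\delta^{(2)})$ in $(N^\circ)^\ell$, and then translate the resulting residue class into $b_{U_\0}(U_\xi, U_{\mu,\nu})$ via the closed formula \eqref{eq:b_U_0-2}.

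The first step is to match bookkeeping. For each $1 \le r \le \ell$, the conditions in Proposition \ref{prop:irred_U_0-mod_in_coset} defining $(\eta_r, d^{(1)}_r, d^{(2)}_r)$ from $(\mu_r, \nu_r)$ are precisely the conditions \eqref{eq:cond_on_abj} applied with $i = \mu_r$, $l = \nu_r$, $a_{k-1} = d^{(1)}_r$, $a_k = d^{(2)}_r$, and $j = \eta_r$. The single-coordinate statement thus gives
\[
  \la x_r, y_r \ra \in \frac{\xi_r((k-1)\nu_r - k\mu_r)}{2k} + \Z
\]
for every $x_r \in \widetilde{N}^{(\xi_r)}$ and $y_r \in N(\eta_r,(0,\ldots,0,d^{(1)}_r,d^{(2)}_r))$, where $\mu_r$ (an integer in $[0,k-1]$) is regarded as an element of $\Z_{2k}$ in the numerator, exactly as in the remark just after \eqref{eq:b_U_0-2}.

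Since the bilinear form on $(N^\circ)^\ell$ is the orthogonal direct sum of the forms on its $\ell$ factors, summing $\la x, y \ra = \sum_{r=1}^\ell \la x_r, y_r\ra$ modulo $\Z$ and applying \eqref{eq:b_U_0-2} yields
\[
  \la x, y \ra \in \frac{1}{2k}(\xi | (k-1)\nu - k\mu) + \Z = b_{U_\0}(U_\xi, U_{\mu,\nu}),
\]
which is the claim. There is essentially no obstacle: the only nontrivial input is the single-coordinate identity, whose verification is a direct calculation after writing $\alpha = \alpha_0 - pd/2k$ with $\alpha_0 \in N$, expanding $\la \alpha, \beta \ra$ using $d = \gamma_k - k\alpha_k$, $\la N, \gamma_k \ra = 0$, and $\la N, L^{(k)} \ra \subset 2\Z$, and finally invoking the congruence $d^{(1)}_r + d^{(2)}_r - \nu_r \equiv 2\eta_r \pmod{2k}$ from \eqref{eq:cond_on_abj} to absorb all correction terms into $\Z$.
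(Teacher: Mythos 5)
Your proposal is correct and follows essentially the same route as the paper: the paper likewise reduces the lemma to the single-coordinate congruence $\la \alpha,\beta\ra \in p((k-1)l-ki)/2k+\Z$ stated just before it, sums over the $\ell$ coordinates, and invokes \eqref{eq:b_U_0-2}. Your additional sketch of how to verify that single-coordinate identity (expanding via $d=\gamma_k-k\alpha_k$ and the conditions \eqref{eq:cond_on_abj}) is sound and merely fills in a computation the paper leaves implicit.
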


Let $X \in \Irr(U_\0)$. 
Then $X = U_{\mu,\nu}$ for some $\mu$ and $\nu$. 
Let $\eta$, $\delta^{(1)}$, and $\delta^{(2)}$ be as in 
Proposition \ref{prop:irred_U_0-mod_in_coset}. 
Then $X$ is contained in $V_{N(\eta, \delta^{(1)}, \delta^{(2)})}$. 
Since $U_\xi$ is contained in $V_{\widetilde{N}(\xi)}$, 
and since the cosets $\widetilde{N}(\xi) + N(\eta, \delta^{(1)}, \delta^{(2)})$ 
of $N^\ell$ in $(N^\circ)^\ell$ are distinct for all $\xi \in D$, 
the $\chi_X$-twisted $U_D$-submodule $U_D \cdot X$ 
of $V_{(N^\circ)^\ell}$ generated by $X$ is isomorphic to 
$U_D \boxtimes_{U_\0} X 
= \bigoplus_{\xi \in D} U_\xi \boxtimes_{U_\0} X$. 
If $\chi_X(\xi) = 1$ for all $\xi \in D$, then 
$N(\eta, \delta^{(1)}, \delta^{(2)}) \subset (\Gamma_D)^\circ$ by 
Lemma \ref{inner_product_cosets}, and we have 
$U_D \cdot X \subset V_{(\Gamma_D)^\circ}$. 
Thus the following theorem holds. 

\begin{theorem}\label{thm:contain_irred}
\textup{(1)} 
Any irreducible $\chi$-twisted $U_D$-module, $\chi \in D^*$, 
is contained in $V_{(N^\circ)^\ell}$.

\textup{(2)} 
Any irreducible untwisted $U_D$-module is contained in $V_{(\Gamma_D)^\circ}$.
\end{theorem}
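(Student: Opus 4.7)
My approach is to realize every irreducible $\chi$-twisted $U_D$-module inside $V_{(N^\circ)^\ell}$ by generating it from a suitable irreducible $U_\0$-submodule, in the spirit of the simple-current extension arguments of \cite[Sections 8, 9]{AYY2019}. The starting observation is that $U_D$ is a $D$-graded simple current extension of $U_\0$, so every $\chi$-twisted $U_D$-module is controlled by its restriction to $U_\0$.

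Let $W$ be an irreducible $\chi$-twisted $U_D$-module. I pick an irreducible $U_\0$-submodule $X \subset W$, which exists by rationality of $U_\0$ (Proposition \ref{prop:U_0}). By Theorem \ref{thm:irr_U0-mod} together with the classification of irreducibles for the tensor product $(U^0)^{\otimes \ell}$, I may write $X \cong U_{\mu,\nu}$ for some $\mu,\nu$. The action of a simple current $U_\xi$ on $X$ inside $W$ twists by the monodromy scalar $\exp(2\pi\sqrt{-1}\, b_{U_\0}(U_\xi, U_{\mu,\nu}))$, so $\chi$ must coincide with $\chi_{U_{\mu,\nu}}$.

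I now realize $X$ concretely as a submodule $X' \subset V_{N(\eta, \delta^{(1)}, \delta^{(2)})} \subset V_{(N^\circ)^\ell}$ using Proposition \ref{prop:irred_U_0-mod_in_coset}, and form the $U_D$-submodule $U_D \cdot X'$ of $V_{(N^\circ)^\ell}$. Since $U_\xi \subset V_{\widetilde{N}(\xi)}$ and the cosets $\widetilde{N}(\xi) + N(\eta, \delta^{(1)}, \delta^{(2)})$ for $\xi \in D$ are pairwise distinct cosets of $N^\ell$ in $(N^\circ)^\ell$, the sum $U_D \cdot X' = \bigoplus_{\xi \in D} U_\xi \cdot X'$ is automatically direct, and each summand is isomorphic as a $U_\0$-module to $U_\xi \boxtimes_{U_\0} X$ by simple current fusion. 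Hence $U_D \cdot X'$ realizes $U_D \boxtimes_{U_\0} X$ inside $V_{(N^\circ)^\ell}$. Lemma \ref{inner_product_cosets} then tells me that the monodromy between $V_{\widetilde{N}(\xi)}$ and $V_{N(\eta,\delta^{(1)},\delta^{(2)})}$ is the scalar $\exp(2\pi\sqrt{-1}\, b_{U_\0}(U_\xi, U_{\mu,\nu})) = \chi_X(\xi)$, so the induced $U_D$-module structure on $U_D \cdot X'$ is precisely $\chi_X$-twisted. Irreducibility of the simple-current extension then forces $W \cong U_D \cdot X'$, giving (1).

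For (2) I take $\chi = 1$. Lemma \ref{lem:linear_char_of_D}(1) becomes $(k-1)\nu - k\mu \in D^\perp$, equivalently $b_{U_\0}(U_\xi, U_{\mu,\nu}) \in \Z$ for every $\xi \in D$ by \eqref{eq:b_U_0-2}. Lemma \ref{inner_product_cosets} upgrades this to $\la x, y \ra \in \Z$ for all $x \in \widetilde{N}(\xi)$ with $\xi \in D$ and $y \in N(\eta, \delta^{(1)}, \delta^{(2)})$, so $N(\eta, \delta^{(1)}, \delta^{(2)}) \subset (\Gamma_D)^\circ$; hence $X' \subset V_{(\Gamma_D)^\circ}$, and since $U_D \subset V_{\Gamma_D}$ preserves $V_{(\Gamma_D)^\circ}$, I conclude $W \cong U_D \cdot X' \subset V_{(\Gamma_D)^\circ}$. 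The main obstacle I anticipate is the monodromy matching in (1): one must confirm that the concrete $U_D$-module structure inherited by $U_D \cdot X'$ from vertex operators in $V_{(N^\circ)^\ell}$ agrees with the abstract $\chi_X$-twisted extension, rather than with some other $D$-graded extension compatible with the same restriction to $U_\0$; the computation of inner products in Lemma \ref{inner_product_cosets} combined with the explicit formula \eqref{eq:b_U_0-2} is exactly what makes this matching work.
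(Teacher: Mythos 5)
Your argument is correct and follows essentially the same route as the paper: realize $X = U_{\mu,\nu}$ inside $V_{N(\eta,\delta^{(1)},\delta^{(2)})}$ via Proposition \ref{prop:irred_U_0-mod_in_coset}, generate the $U_D$-submodule of $V_{(N^\circ)^\ell}$, use the distinctness of the cosets $\widetilde{N}(\xi)+N(\eta,\delta^{(1)},\delta^{(2)})$ to identify it with $U_D\boxtimes_{U_\0}X$, and invoke Lemma \ref{inner_product_cosets} for the $\chi_X$-twisting and for part (2). The only slip is the claim that irreducibility forces $W\cong U_D\cdot X'$: when $D_X\ne 0$ (see Theorem \ref{thm:irred_U_D-mod}(2)) the induced module is reducible and $W$ is merely a direct summand of it, but since that summand still sits inside $V_{(N^\circ)^\ell}$ (resp.\ $V_{(\Gamma_D)^\circ}$), the containment conclusion is unaffected.
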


Define an action of $D$ on $\Irr(U_\0)$ by 
$X \mapsto U_\xi \boxtimes_{U_\0} X$
for $\xi \in D$ and $X \in \Irr(U_\0)$. 
Let $\Irr(U_\0) = \bigcup_{i\in I} \orbit_i$ 
be the $D$-orbit decomposition, and let 
$D_X = \{ \xi \in D \mid U_\xi \boxtimes_{U_\0} X = X \}$ 
be the stabilizer of $X$. 
The next lemma holds by Lemma \ref{lem:stab-U0}. 

\begin{lemma}\label{lem:exceptional_id_U0}
$U_\xi \boxtimes_{U_{\0}} U_{\mu,\nu} = U_{\mu,\nu}$  
for some $\xi \ne \0$ if and only if 
$k$ is odd, $\xi = (\xi_1,\ldots,\xi_\ell) \in \{0, k\}^\ell$, and 
$\mu_r = (k-1)/2$ for $1 \le r \le \ell$ such that $\xi_r = k$. 
\end{lemma}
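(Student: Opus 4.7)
The plan is to reduce to the rank-one case via the tensor product decomposition and then invoke Lemma \ref{lem:stab-U0} componentwise. By construction
\[
  U_\xi \;=\; U^{\xi_1} \otimes \cdots \otimes U^{\xi_\ell},
  \qquad
  U_{\mu,\nu} \;=\; U^{\mu_1,\nu_1} \otimes \cdots \otimes U^{\mu_\ell,\nu_\ell},
\]
and $U_\0 = (U^0)^{\otimes \ell}$. Since $U^0$ is simple, rational, self-dual and $C_2$-cofinite (Theorem \ref{thm:U0}) and the $U^{\xi_r}$ are simple currents (Corollary \ref{cor:simple_currents}), the fusion product over a tensor product of VOAs factorizes as a tensor product of componentwise fusion products (for instance via \cite[Theorem 2.10]{ADL2005}), giving
\[
  U_\xi \boxtimes_{U_\0} U_{\mu,\nu}
  \;\cong\; \bigotimes_{r=1}^{\ell} \bigl( U^{\xi_r} \boxtimes_{U^0} U^{\mu_r,\nu_r} \bigr).
\]
Each factor is an irreducible $U^0$-module, so the tensor product equals $U_{\mu,\nu}$ if and only if the equality $U^{\xi_r} \boxtimes_{U^0} U^{\mu_r,\nu_r} = U^{\mu_r,\nu_r}$ holds for every $r$.

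Next I would apply Lemma \ref{lem:stab-U0} to each index $r$: the componentwise equality holds precisely when either $\xi_r = 0$, or $k$ is odd with $\mu_r = (k-1)/2$ and $\xi_r = k$. Assuming $\xi \ne \0$, pick any $r$ with $\xi_r \ne 0$; then $k$ must be odd and $\xi_r = k$, $\mu_r = (k-1)/2$. The same dichotomy then forces every other $\xi_r$ to lie in $\{0,k\}$, with $\mu_r = (k-1)/2$ whenever $\xi_r = k$. This is exactly the stated criterion, and the converse is immediate by reading the argument backwards.

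The main obstacle is essentially bookkeeping: one has to be careful that the rank-one alternatives of Lemma \ref{lem:stab-U0} assemble into a single global criterion — in particular, that the parity condition ``$k$ is odd'' is a global condition, while the condition on $\mu_r$ is imposed only on the support of $\xi$. No new analytic input is required once the tensor-product fusion factorization is in hand.
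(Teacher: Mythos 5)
Your proposal is correct and follows the same route as the paper, which simply derives the lemma from Lemma \ref{lem:stab-U0} via the componentwise factorization of the fusion product over $U_\0 = (U^0)^{\otimes \ell}$ (equivalently, via $U_\xi \boxtimes_{U_\0} U_{\mu,\nu} = U_{\mu,\nu+\xi}$ and Lemma \ref{lem:equiv_Uiell} applied in each tensor factor). Your write-up just makes explicit the bookkeeping the paper leaves implicit.
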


We study the structure of $U_D \boxtimes_{U_\0} X$ for $X \in \Irr(U_\0)$. 
If $D_X = 0$, 
then $U_D \boxtimes_{U_\0} X$ is an irreducible $\chi_X$-twisted $U_D$-module.  

Suppose $D_X \ne 0$. 
Then $k$ is odd, and $D_X \subset \{0, k\}^\ell$ by Lemma \ref{lem:exceptional_id_U0}. 
Let $C = \{ (0), (k) \}$ be a $\Z_{2k}$-code of length one consisting of two codewords 
$(0)$ and $(k)$. 
The code $C$ is in Case A or in Case B 
according as $k \equiv 1$ or $k \equiv 3 \pmod{4}$. 
Hence the $\Z_2$-graded simple current extension $U_C = U^0 \oplus U^k$ of $U^0$ 
is a simple vertex operator algebra with $h(U^k) \in \Z$ or 
a simple vertex operator superalgebra with $h(U^k) \in \Z + 1/2$ 
according as $k \equiv 1$ or $k \equiv 3 \pmod{4}$. 
We can regard any additive subgroup of $\{ 0, k\}^\ell \subset (\Z_{2k})^\ell$ 
as an additive subgroup of $(\Z_2)^\ell$ 
under the correspondence $0 \mapsto 0$ and $k \mapsto 1$. 
Since $k$ is odd, the correspondence is the reduction modulo $2$, 
and it gives an isometry from 
$(\{ 0, k\}^\ell, (\, \cdot\, | \, \cdot \,))$ to $((\Z_2)^\ell, (\, \cdot\, | \, \cdot \,))$, 
where $(\, \cdot\, | \, \cdot \,)$ is 
the standard inner product on either $(\Z_{2k})^\ell$ or $(\Z_2)^\ell$. 
In particular, $D_X \cap D_X^\perp$ in $(\Z_{2k})^\ell$ corresponds to 
$D_X \cap D_X^\perp$ in $(\Z_2)^\ell$. 
Thus the following theorem holds by Propositions 2.3, 2.5, and 2.6 of \cite{AYY2019}.  

\begin{theorem}\label{thm:irred_U_D-mod}
Let $X \in \Irr(U_\0)$. 

\textup{(1)} 
If $D_X = 0$, 
then $U_D \boxtimes_{U_\0} X$ is an irreducible $\chi_X$-twisted $U_D$-module.

\textup{(2)} 
Suppose $k$ is odd and $D_X \ne 0$.

If $k \equiv 1 \pmod{4}$, then 
$U_D \boxtimes_{U_\0} X = \bigoplus_{j = 1}^{\abs{D_X}} V^j$, 
where $V^j$, $1 \le j \le \abs{D_X}$, are inequivalent irreducible 
$\chi_X$-twisted $U_D$-modules. 
Furthermore, 
$V^j \cong \bigoplus_{W \in \orbit_i} W$ 
as $U_\0$-modules, where $\orbit_i$ is the $D$-orbit in $\Irr(U_\0)$ 
containing $X$.

If $k \equiv 3 \pmod{4}$, then 
$U_D \boxtimes_{U_\0} X = \bigoplus_{j=1}^{\abs{D_X \cap D_X^\perp}} (V^j)^{\oplus m}$, 
where $m = [D_X : D_X \cap D_X^\perp]^{1/2}$, and 
$V^j$, $1 \le j \le \abs{D_X \cap D_X^\perp}$, are inequivalent irreducible 
$\chi_X$-twisted $U_D$-modules. 
Furthermore, 
$V^j \cong \bigoplus_{W \in \orbit_i} W^{\oplus m}$ 
as $U_\0$-modules, where $\orbit_i$ is the $D$-orbit in $\Irr(U_\0)$ 
containing $X$.
\end{theorem}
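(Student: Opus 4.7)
The plan is to apply the general framework for irreducible modules over $D$-graded simple current extensions developed in \cite{AYY2019}, using the realization of everything inside $V_{(N^\circ)^\ell}$ established in Theorem \ref{thm:contain_irred}. The $D$-action on $\Irr(U_\0)$ is by fusion with the simple currents $U_\xi$, and any irreducible $\chi_X$-twisted $U_D$-module arising in $V_{(N^\circ)^\ell}$ has the form $U_D \cdot X \cong \bigoplus_{\xi \in D} U_\xi \boxtimes_{U_\0} X$ when the summands are pairwise inequivalent, and a quotient of this sum otherwise.

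For part (1), when $D_X = 0$, the fusion action of $D$ on $X$ produces $\abs{D}$ pairwise inequivalent irreducible $U_\0$-submodules inside $V_{(N^\circ)^\ell}$. By Lemma \ref{inner_product_cosets}, the pairing between the cosets $\widetilde{N}(\xi)$ and $N(\eta, \delta^{(1)}, \delta^{(2)})$ realizes exactly the $\chi_X$-twisted braiding, and the distinctness of the cosets $\widetilde{N}(\xi) + N(\eta, \delta^{(1)}, \delta^{(2)})$ modulo $N^\ell$ prevents any collapsing among the $U_\xi \boxtimes_{U_\0} X$. Irreducibility as a $\chi_X$-twisted $U_D$-module then follows from \cite[Proposition 2.3]{AYY2019} applied in the present setting, since the $D$-orbit of $X$ is free.

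For part (2), I would first invoke Lemma \ref{lem:exceptional_id_U0} to restrict to $k$ odd with $D_X \subseteq \{0,k\}^\ell$. Since $k$ is odd, reduction modulo $2$ yields a group isometry $(\{0,k\}^\ell, (\,\cdot\,|\,\cdot\,)) \cong ((\Z_2)^\ell, (\,\cdot\,|\,\cdot\,))$, so $D_X$ corresponds to a $\Z_2$-code carrying its standard form, and the essential extension governing the stabilizer is the rank-one $U_C = U^0 \oplus U^k$. The conformal weight formula \eqref{eq:h_Ul} gives $h(U^k) \equiv (k-1)k/4 \pmod{\Z}$, which is integral when $k \equiv 1 \pmod{4}$ and half-integral when $k \equiv 3 \pmod{4}$; accordingly $U_C$ is a vertex operator algebra in the first case and a vertex operator superalgebra in the second. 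With this identification, Propositions 2.5 and 2.6 of \cite{AYY2019} apply essentially verbatim, yielding the two asserted decompositions and matching the $U_\0$-module structure of each $V^j$ with the direct sum over the $D$-orbit $\orbit_i$ containing $X$.

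The main obstacle is the $k \equiv 3 \pmod{4}$ case, where $U_C$ is a superalgebra and the $D_X$-action on the orbit summands of $U_D \cdot X$ does not split uniformly. There the multiplicity $m = [D_X : D_X \cap D_X^\perp]^{1/2}$ arises from a Lagrangian-type structure on the quotient $D_X/(D_X \cap D_X^\perp)$, whose order must be a perfect square because the induced bilinear form is non-degenerate and alternating in the relevant symplectic sense (the integer shift comes from the parity of $h(U^k)$). Once the isometric identification with the $\Z_2$-code setting of \cite{AYY2019} is in place, this bookkeeping is already carried out in the cited propositions, and the remaining work is simply to verify that distinct characters of $D_X/(D_X \cap D_X^\perp)$ label inequivalent irreducible summands $V^j$, which is immediate from the distinctness of their $D_X$-eigencharacters on the orbit.
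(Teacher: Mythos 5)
Your proposal is correct and follows essentially the same route as the paper: part (1) is the free-orbit case of the general simple current extension theory (Proposition 2.3 of \cite{AYY2019}), and part (2) reduces via Lemma \ref{lem:exceptional_id_U0} to the $\Z_2$-code setting through the isometry $\{0,k\}^\ell \cong (\Z_2)^\ell$, distinguishes $k \equiv 1$ from $k \equiv 3 \pmod 4$ by the parity of $h(U^k) \equiv (k-1)k/4 \pmod{\Z}$ (equivalently, whether $C = \{(0),(k)\}$ is in Case A or Case B), and then quotes Propositions 2.5 and 2.6 of \cite{AYY2019}. This is exactly the paper's argument.
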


Any irreducible $\chi$-twisted $U_D$-module, $\chi \in D^*$,   
is isomorphic to a direct summand 
of $U_D\boxtimes_{U_{\0}} X$ with $\chi = \chi_X$ for some $X \in \Irr(U_\0)$. 
Thus the classification of irreducible $\chi$-twisted $U_D$-modules 
for any $\chi \in D^*$ is obtained by  
Theorem \ref{thm:irred_U_D-mod}. 

We can write $\chi_i$ for $\chi_X$, and $D_i$ for $D_X$ 
if $X$ belongs to a $D$-orbit $\orbit_i$ in $\Irr(U_\0)$, 
as $\chi_X$ and $D_X$ are independent of the choice of $X \in \orbit_i$.
Let $I(\chi) = \{ i \in I \mid \chi_i = \chi \}$, 
which is non-empty by Lemma \ref{lem:linear_char_of_D}. 
%

By the above arguments, we obtain the next theorem.

\begin{theorem}\label{thm:count_irred_twisted_mod}
The number of inequivalent irreducible 
$\chi$-twisted $U_D$-modules for $\chi \in D^*$ is as follows.
\begin{alignat*}{2}
& \abs{I(\chi)} & \quad 
& \text{if $k$ is even},\\
& \abs{I(\chi)_0} + \sum_{i \in I(\chi)_1} \abs{D_i} & \quad 
& \text{if } k \equiv 1 \pmod{4},\\
& \abs{I(\chi)_0} + \sum_{i \in I(\chi)_1} \abs{D_i \cap D_i^\perp} & \quad 
& \text{if } k \equiv 3 \pmod{4}, 
\end{alignat*}
where $I(\chi)_0 = \{ i \in I(\chi) \mid D_i = 0 \}$ and 
$I(\chi)_1 = I(\chi) \setminus I(\chi)_0$. 
\end{theorem}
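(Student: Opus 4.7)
The plan is a direct counting argument built on Theorem \ref{thm:irred_U_D-mod} and the observation (stated in the paragraph just before the theorem) that every irreducible $\chi$-twisted $U_D$-module is a direct summand of some $U_D\boxtimes_{U_\0} X$ with $\chi_X = \chi$. The main work is a bookkeeping reduction to a sum over $D$-orbits in $\Irr(U_\0)$.

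First I would reduce the count to a sum over $D$-orbits. For $\xi \in D$ one has $U_D \boxtimes_{U_\0}(U_\xi \boxtimes_{U_\0} X) \cong U_D \boxtimes_{U_\0} X$, so which irreducible $\chi$-twisted $U_D$-modules are produced depends only on the $D$-orbit $\orbit_i$ of $X$; and since $\chi_X$ is constant on $\orbit_i$, the orbits that contribute are precisely those with $i \in I(\chi)$.

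Next I would show that distinct orbits $\orbit_i$ and $\orbit_{i'}$ contribute pairwise non-isomorphic irreducibles. By Theorem \ref{thm:irred_U_D-mod}, any irreducible summand of $U_D \boxtimes_{U_\0} X$ restricts, as a $U_\0$-module, to $\bigoplus_{W \in \orbit_i} W$ (or to the same sum with a uniform multiplicity $m = [D_X:D_X\cap D_X^\perp]^{1/2}$ in the $k \equiv 3 \pmod{4}$ case with $D_X \ne 0$). Since the orbits partition $\Irr(U_\0)$ and $m$ is determined by $\orbit_i$, the underlying orbit can be recovered from the $U_\0$-restriction; hence there is no double counting across orbits.

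Finally I would apply Theorem \ref{thm:irred_U_D-mod} orbit by orbit. When $k$ is even, Lemma \ref{lem:exceptional_id_U0} forces $D_X = 0$ for every $X$, so each $i \in I(\chi)$ contributes exactly one irreducible and the total is $\abs{I(\chi)}$. When $k$ is odd, every $i \in I(\chi)_0$ contributes one irreducible, while $i \in I(\chi)_1$ contributes $\abs{D_i}$ if $k \equiv 1 \pmod{4}$ and $\abs{D_i \cap D_i^\perp}$ if $k \equiv 3 \pmod{4}$; summing gives the three displayed formulas. The main (and only) obstacle is the disjointness verification in the middle step, which is essentially forced by the explicit $U_\0$-socle description in Theorem \ref{thm:irred_U_D-mod}(2); the rest is addition.
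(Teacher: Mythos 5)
Your proposal is correct and follows essentially the same route as the paper, which simply asserts the theorem ``by the above arguments,'' namely the remark that every irreducible $\chi$-twisted $U_D$-module is a summand of some $U_D\boxtimes_{U_\0}X$ with $\chi_X=\chi$, combined with the orbit-by-orbit count from Theorem \ref{thm:irred_U_D-mod} and Lemma \ref{lem:exceptional_id_U0}. Your explicit disjointness check across orbits via the $U_\0$-restriction is exactly the (implicit) justification the paper relies on.
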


\subsection{Irreducible $U_D$-modules: Case B}\label{subsec:irred_U_D-modules_B}

Let $D$ be a $\Z_{2k}$-code of length $\ell$ in Case B of Section \ref{subsec:gamma_D-U_D},  
and let $D^0$ and $D^1$ be as in Section \ref{subsec:gamma_D-U_D}. 
Since $D^0$ is a $\Z_{2k}$-code of length $\ell$ in Case A, 
we see from Section \ref{subsec:irred_U_D-modules_A} that any irreducible 
$U_{D^0}$-module $P$ is isomorphic to a direct summand of 
$U_{D^0} \boxtimes_{U_\0} X$ for some $X \in \Irr(U_\0)$, 
and that $X$ is contained in $V_{N(\eta, \delta^{(1)}, \delta^{(2)})}$ 
for some coset $N(\eta, \delta^{(1)}, \delta^{(2)})$ of $N^\ell$ in $(\Gamma_{D^0})^\circ$. 
Since $U_D = U_{D^0} \oplus U_{D^1}$, 
the $U_D$-submodule $U_D \cdot P$ of $V_{(\Gamma_{D^0})^\circ}$ 
generated by $P$ is isomorphic to 
$U_D \boxtimes_{U_{D^0}} P$. 
Moreover, $U_D \boxtimes_{U_{D^0}} P$ is 
either an irreducible $U_D$-module or a direct sum of two irreducible $U_D$-modules.  
Since any irreducible $U_D$-module is obtained in this way,  
the following theorem holds.

\begin{theorem}\label{thm:contain_irred_B}
  Any irreducible $U_D$-module is contained in $V_{(\Gamma_{D^0})^\circ}$.
\end{theorem}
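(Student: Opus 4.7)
The plan is to apply the Case A result of Theorem \ref{thm:contain_irred}(2) to the even subcode $D^0$ and then extend to $U_D$. Since $D^0$ has index two in $D$ and lies in Case A, Theorem \ref{thm:U_D}(1) tells us that $U_{D^0}$ is a simple, self-dual, rational, $C_2$-cofinite vertex operator algebra, and $U_D = U_{D^0} \oplus U_{D^1}$ is a $\Z_2$-graded simple current extension. Given an abstract irreducible $U_D$-module $M$, I would first restrict to $U_{D^0}$; by rationality, this restriction is a finite direct sum of irreducible $U_{D^0}$-modules. Picking any irreducible $U_{D^0}$-submodule $P$ of $M$, Theorem \ref{thm:contain_irred}(2) applied to $D^0$ realizes $P$ inside $V_{(\Gamma_{D^0})^\circ}$; more concretely, $P$ occupies a union of cosets of $N^\ell$ of the form $\widetilde{N}(\xi) + N(\eta,\delta^{(1)},\delta^{(2)})$, $\xi \in D^0$, each sitting in $(\Gamma_{D^0})^\circ$. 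By the uniqueness of the simple current extension structure of $U_D$ over $U_{D^0}$ (cf.\ Proposition 3.8 of \cite{SY2003}), $M$ is isomorphic to a direct summand of $U_D \boxtimes_{U_{D^0}} P$, which is realized inside $V_{(N^\circ)^\ell}$ as the $U_D$-submodule $U_D \cdot P$ generated by the lattice copy of $P$.

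Once $M$ is identified with a summand of $U_D \cdot P$ in $V_{(N^\circ)^\ell}$, it suffices to prove $U_D \cdot P \subset V_{(\Gamma_{D^0})^\circ}$. I would decompose $U_D \cdot P = P + U_{D^1} \cdot P$; the first summand already lies in $V_{(\Gamma_{D^0})^\circ}$ by construction. For the second, using $U_{D^1} \subset V_{\Gamma_{D^1}}$, the required containment reduces to the lattice inclusion
\[
  \widetilde{N}(\zeta) + N(\eta,\delta^{(1)},\delta^{(2)}) \subset (\Gamma_{D^0})^\circ
  \quad \text{for every } \zeta \in D^1,
\]
which by Lemma \ref{lem:inner_prod_Nxi_Neta} follows from the defining Case B hypothesis $\frac{k-1}{2k}\,\zeta\cdot\xi \in \Z$ for all $\xi \in D^0$, $\zeta \in D^1$, combined with the already-noted $N(\eta,\delta^{(1)},\delta^{(2)}) \subset (\Gamma_{D^0})^\circ$.

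The main obstacle I anticipate is the passage from the abstract module $M$ to its lattice model: one must invoke the structure theorem for the $\Z_2$-graded simple current extension $U_D = U_{D^0} \oplus U_{D^1}$ to guarantee that every irreducible $U_D$-module arises as a direct summand of $U_D \boxtimes_{U_{D^0}} P$ for some irreducible $U_{D^0}$-module $P$, and that the realization of $P$ inside $V_{(\Gamma_{D^0})^\circ}$ propagates to a realization of $M$ inside $V_{(N^\circ)^\ell}$. This is a standard consequence of simple current extension theory, but its invocation in the Case B vertex operator superalgebra setting deserves explicit citation rather than being treated as automatic.
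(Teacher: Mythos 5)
Your proposal is correct and follows essentially the same route as the paper: restrict an irreducible $U_D$-module to $U_{D^0}$, realize an irreducible $U_{D^0}$-constituent $P$ in $V_{(\Gamma_{D^0})^\circ}$ via the Case A results, and recover the module as a summand of $U_D \boxtimes_{U_{D^0}} P \cong U_D \cdot P$ inside $V_{(\Gamma_{D^0})^\circ}$. Your explicit verification of the lattice inclusion $\widetilde{N}(\zeta) + N(\eta,\delta^{(1)},\delta^{(2)}) \subset (\Gamma_{D^0})^\circ$ for $\zeta \in D^1$ is a detail the paper leaves implicit, but it is the same argument.
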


\appendix

\section{Top level of $U^l$, $0 \le l < 2k$}

In this appendix,  we prove the following theorem on the top level of 
$U^l$, $0 \le l < 2k$, defined in \eqref{eq:dec_Ul}.

\begin{theorem}\label{thm:top_level_Ul}
The weight and the dimension of the top level of the simple current 
$U^0$-module $U^l$, $0 \le l < 2k$, are as follows.

\textup{(1)} 
If $l = 0$, then the weight is $0$ and the dimension is $1$. 

\textup{(2)}
If $l$ is odd, 
then the weight is $l(2k - l)/4k - 1/4$ and the dimension is $1$. 

\textup{(3)} 
If $l \ne 0$ is even, 
then the weight is $l(2k - l)/4k$ and the dimension is $2$. 
\end{theorem}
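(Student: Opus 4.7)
The plan is to use the second description of $U^l = U^{0,l}$ given by
Theorem \ref{thm:Uij}. Setting $i = 0$, the constraints $i \equiv b_{k-1} \pmod{2}$
and $l \equiv b_k \pmod{2}$ force $a_{k-1} = 0$ and determine the parity of $a_k$,
yielding
\begin{equation*}
  U^l = \bigoplus_{\substack{0 \le i_k \le k\\ i_k \equiv l \,(\bmod\, 2)}}
  L(c_{k-1}, h^{(k-1)}_{1, i_k+1}) \otimes M_{(k)}^{i_k, (i_k - l)/2},
\end{equation*}
where the parafermion index is understood modulo $k$. Each summand has a
one-dimensional top level, since $L(c_{k-1}, h)$ has a one-dimensional top and
so does every irreducible $M_{(k)}^0$-module (Section \ref{subsec:paraf_VOA},
property (3)). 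Hence the top of $U^l$ is the direct sum of the tops of those
summands realising the minimum total conformal weight, and its dimension is the
number of minimisers.

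First I would compute the summand weight $w(i_k)$. Direct substitution into the
formula of Section \ref{subsec:Virasoro_VOA} gives
$h^{(k-1)}_{1, i_k+1} = i_k\bigl((k+1)i_k - 2\bigr)/\bigl(4(k+2)\bigr)$. For the
parafermion weight, writing $\tilde{l} = \min(l, 2k - l)$, when $i_k \ge \tilde{l}$
the reduced index $(i_k - l)/2 \bmod k$ already lies in $[0, i_k]$, so
\eqref{eq:top-wt-Mkij} applies directly; when $i_k < \tilde{l}$ I would first
invoke the isomorphism \eqref{eq:isom_Mij}
$M_{(k)}^{i_k, j} \cong M_{(k)}^{k - i_k, j - i_k}$ to bring the index into
the range of validity of \eqref{eq:top-wt-Mkij}. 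Collecting terms yields
\begin{equation*}
  w(i_k) =
  \begin{cases}
    \dfrac{i_k^2}{4} - \dfrac{\tilde{l}^2}{4k} & \text{if } i_k \ge \tilde{l},\\[6pt]
    \dfrac{k + i_k(i_k - 2)}{4} - \dfrac{(l - k)^2}{4k} & \text{if } i_k < \tilde{l}.
  \end{cases}
\end{equation*}

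Next I would minimise $w(i_k)$ over admissible $i_k \in \{0, 1, \ldots, k\}$ with
$i_k \equiv l \pmod{2}$. The first branch is strictly increasing, so its minimum
is $w(\tilde{l}) = \tilde{l}^2(k-1)/(4k)$; in the second branch, $i_k(i_k - 2)$
is minimised at $i_k = 1$ when the parity is odd and at $i_k \in \{0, 2\}$ when
the parity is even. Using the identity
$l(2k - l) = \tilde{l}(2k - \tilde{l}) = k^2 - (l - k)^2$, direct subtraction gives
$w(\tilde{l}) - w(1) = (\tilde{l} - 1)^2/4$ and
$w(\tilde{l}) - w(0) = \tilde{l}(\tilde{l} - 2)/4$, both nonnegative. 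Substituting
back recovers the three claimed cases: minimum $0$ uniquely at $i_k = 0$ when
$l = 0$; minimum $l(2k - l)/(4k) - 1/4$ uniquely at $i_k = 1$ when $l$ is odd;
and minimum $l(2k - l)/(4k)$ at both $i_k = 0$ and $i_k = 2$ when $l$ is even
and nonzero.

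The delicate step is the bookkeeping at the boundary values
$\tilde{l} \in \{1, 2\}$, i.e., $l \in \{1, 2, 2k - 2, 2k - 1\}$: at
$\tilde{l} = 1$ the second branch supplies no admissible $i_k$ and the minimum
comes from the first branch at $i_k = 1$, while at $\tilde{l} = 2$ the comparison
$w(\tilde{l}) = w(0)$ gives equality, so a first-branch minimiser at $i_k = 2$
and a second-branch minimiser at $i_k = 0$ tie. In each boundary case one must
verify that no further $i_k$ accidentally attains the minimum and that the
resulting multiplicities match the three-case statement; this amounts to a
direct check using the explicit formulas above.
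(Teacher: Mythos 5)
Your proposal is correct, but it runs along the ``other axis'' of Theorem \ref{thm:Uij}: you decompose $U^l$ as an $L(c_{k-1},0)\otimes M_{(k)}^0$-module and minimise the summand weight over $i_k$, whereas the paper uses the first description $U^l=\bigoplus_{j=0}^{k-2}M_{(k-1)}^j\otimes V_{\Z d-ld/2k-jd/(k-1)}$ and minimises a quadratic $P(j)$ over $j$, after first reducing to $0\le l\le k$ via the automorphism $\theta$ (your substitution $\tilde l=\min(l,2k-l)$ plays exactly that role). I checked your two closed forms for $w(i_k)$: using $h(M_{(k)}^{i,j})=\frac{i(i+2)}{4(k+2)}-\frac{(i-2j)^2}{4k}$ in the standard range together with \eqref{eq:isom_Mij}, and $h^{(k-1)}_{1,i_k+1}=\frac{i_k((k+1)i_k-2)}{4(k+2)}$, both branches come out as you state, and the comparisons $w(\tilde l)-w(1)=(\tilde l-1)^2/4$ and $w(\tilde l)-w(0)=\tilde l(\tilde l-2)/4$ are right; strict monotonicity of $i_k^2$ and of $(i_k-1)^2$ on each parity class settles the deferred uniqueness check, so the boundary cases $\tilde l\in\{1,2\}$ work out as you describe. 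The main structural difference is where the dimension $2$ comes from: in the paper it is produced by a single summand whose lattice factor $V_{\Z d-ld/2k-jd/(k-1)}$ has a two-dimensional top (two shortest vectors) when $(k-1)l+2kj=(k-1)k$, while in your decomposition every tensor summand has a one-dimensional top and the $2$ arises from a tie between the two distinct summands $i_k=0$ and $i_k=2$. Your route arguably makes the $-1/4$ in the odd case more transparent (it is the gap $((i_k-1)^2-1)/4$ between $i_k=1$ and $i_k\in\{0,2\}$), at the cost of slightly more bookkeeping with the parafermion index reduction modulo $k$; the paper's route keeps all weights in one family of lattice cosets and reads the degeneracy off the lattice.
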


\begin{proof}
Since $U^l \circ \theta \cong U^{-l} = U^{2k-l}$ by \eqref{eq:theta_act_Uil}, 
it is enough to consider the case $0 \le l \le k$. 
The top level of $U^0$ is $\C\1$, and the assertion (1) holds. 
Thus we assume that $1 \le l \le k$. 
If $k = 2$, then $U^l = V_{\Z d - l d/4}$ with $\langle d,d \rangle = 4$, 
as $M_{(1)}^0 = \C\1$. 
Hence the theorem holds for $k = 2$. 
So we assume that $k \ge 3$. 

Recall the notation $X(i,j,l)$ in Section \ref{sec:VOA_U0}. 
For a fixed $l$, let $P(j)$ be the conformal weight of $X(0,j,l)$. 
Since $U^l = \bigoplus_{j=0}^{k-2} X(0,j,l)$,  
we need to calculate the minimum value of $P(j)$ for integers $j$ in the range 
$0 \le j < k-1$. 
We have
\begin{equation}\label{eq:Pj_range1}
\begin{split}
  P(j) 
  &= \frac{j(k-1-j)}{k-1} + \frac{( (k-1)l + 2kj )^2}{4(k-1)k}\\
  &= \big( j + \frac{l+1}{2} \big)^2 - \frac{(l+1)^2}{4} + \frac{(k-1) l^2}{4k}
\end{split}
\end{equation}
for $j$ in the range 
\begin{equation}\label{eq:range1_j}
  0 \le j \le (k-1)(k-l)/2k, 
\end{equation}
and 
\begin{equation}\label{eq:Pj_range2}
\begin{split}
  P(j) 
  &= \frac{j(k-1-j)}{k-1} + \frac{( (k-1)l + 2kj - 2(k-1)k )^2}{4(k-1)k}\\
  &= \big( j - ( k - \frac{l+1}{2} ) \big)^2 + \frac{l(2k - l)}{4k} - \frac{1}{4}
\end{split}
\end{equation}
for $j$ in the range 
\begin{equation}\label{eq:range2_j}
  (k-1)(k-l)/2k \le j < k-1. 
\end{equation}

The dimension of the top level of $V_{\Z d - l d/2k - j d/(k-1)}$ 
is $2$ if $(k-1)l + 2kj = (k-1)k$, otherwise it is $1$. 
Since the dimension of the top level of $M_{(k-1)}^j$ is $1$, 
the dimension of the top level of $X(0,j,l)$ is 
$2$ if $l = k$ and $j = 0$, otherwise it is $1$, 
as $1 \le l \le k$. 

The minimum value of the quadratic polynomial $P(j)$ for integers $j$ in the range 
\eqref{eq:range1_j} is $(k-1)l^2/4k$ at $j = 0$ by \eqref{eq:Pj_range1}.
As for the minimum value of $P(j)$ for integers $j$ in the range \eqref{eq:range2_j}, 
note that
\[
(k-1)( k-l )/2k \le k - 1 - (l+1)/2 < k-1,
\]
as $k \ge 3$ and $1 \le l \le k$. 
We argue the cases $l = 1$, $2$, and $l \ge 3$ separately. 

First, assume that $l = 1$. Then $k - (l+1)/2 = k-1$ is not in  
the range \eqref{eq:range2_j}. 
So the minimum value of $P(j)$ for integers $j$ in the range \eqref{eq:range2_j} 
is $5/4 - 1/4k$ at $j = k-2$ by \eqref{eq:Pj_range2}. 
Since $P(0) < P(k-2)$, the assertion (2) holds for $l = 1$. 

Next, assume that $l = 2$. Then $k - (l+1)/2 = k-3/2$, 
so the minimum value of $P(j)$ for integers $j$ in the range \eqref{eq:range2_j} 
is $(k-1)/k$ at $j = k-2$  by \eqref{eq:Pj_range2}. 
Since $P(0) = P(k-2)$, the assertion (3) holds for $l = 2$. 

Now, assume that $3 \le l \le k$. 
Suppose $l$ is odd. 
Then the minimum value of $P(j)$ for integers $j$ in the range \eqref{eq:range2_j} is 
$l(2k - l)/4k - 1/4$ at $j = k-(l+1)/2$ by \eqref{eq:Pj_range2}. 
The minimum value is smaller than $P(0)$. 
Thus the assertion (2) holds. 

Finally, suppose $4 \le l \le k$ and $l$ is even. 
Then the minimum value of $P(j)$ for integers $j$ in the range \eqref{eq:range2_j} is 
$l(2k - l)/4k$ at $j =  k-1-l/2$ and $k-l/2$ by \eqref{eq:Pj_range2}. 
The minimum value is smaller than $P(0)$. 
Thus the assertion (3) holds. 
The proof is complete.
\end{proof}


\begin{thebibliography}{1000}

\bibitem{ADL2005}
Toshiyuki Abe, Chongying Dong, and Haisheng Li, 
Fusion rules for the vertex operator algebra $M(1)$ and $V_L^+$, 
\textit{Commun. Math. Phys.} \textbf{253} (2005), 171--219.

\bibitem{Adamovic2007}
Dra\v{z}en Adamovi\'c, 
A family of regular vertex operator algebras with two generators, 
\textit{Central European J. Math.} \textbf{5} (2007), 1--18.

\bibitem{ALY2014}
Tomoyuki Arakawa, Ching Hung Lam, and Hiromichi Yamada, 
Zhu's algebra, $C_2$-algebra and $C_2$-cofiniteness of 
parafermion vertex operator algebras, 
\textit{Advances Math.} \textbf{264} (2014), 261--295.

\bibitem{ALY2019}
Tomoyuki Arakawa, Ching Hung Lam, and Hiromichi Yamada,
Parafermion vertex operator algebras and W-algebras, 
\textit{Trans. Amer. Math. Soc.} \textbf{371} (2019), 4277--4301.

\bibitem{AYY2019}
Tomoyuki Arakawa, Hiromichi Yamada, and Hiroshi Yamauchi, 
$\Z_k$-code vertex operator algebras, 
arXiv:1907.10216.

\bibitem{Carnahan2014}
Scott Carnahan, 
Building vertex algebras from parts, 
arXiv:1408.5215v3.

\bibitem{CKL2015}
Thomas  Creutzig, Shashank Kanade, and Andrew R. Linshaw,
Simple current extensions beyond semi-simplicity,
preprint, arXiv:1511.08754.

\bibitem{CKM2017}
Thomas Creutzig, Shashank Kanade, and Robert McRae, 
Tensor categories for vertex operator superalgebra extensions, 
preprint, arXiv:1705.05017v1.

\bibitem{DLWY2010}
Chongying Dong, Ching Hung Lam, Qing Wang, and Hiromichi Yamada,
The structure of parafermion vertex operator algebras,
\textit{J. Algebra} \textbf{323} (2010), 371--381.

\bibitem{DLY2009}
Chongying Dong, Ching Hung Lam, and Hiromichi Yamada,
$W$-algebras related to parafermion algebras,
\textit{J. Algebra} \textbf{322} (2009), 2366--2403.

\bibitem{DL1993}
Chongying Dong and James Lepowsky, 
\textit{Generalized Vertex Algebras and RelativeVertex Operators}, 
Progress in Math., Vol. 112, Birkh\"{a}user, Boston,
1993.

\bibitem{DLM1996a}
Chongying Dong, Haisheng Li, and Geoffrey Mason, 
Simple currents and extensions of vertex operator algebras, 
\textit{Commun. Math. Phys.} \textbf{180} (1996), 671--707.

\bibitem{DW2010}
Chongying Dong and Qing Wang, 
The structure of parafermion vertex operator algebras: general case,
\textit{Commun. Math. Phys.} \textbf{299} (2010), 783--792.

\bibitem{DW2016}
Chongying Dong and Qing Wang, 
Quantum dimensions and fusion rules for parafermion vertex operator algebras, 
\textit{Proc. Amer. Math. Soc.} \textbf{144} (2016), 1483--1492.

\bibitem{vEMS2017}
Jethro van Ekeren, Sven M\"{o}ller, and Nils Scheithauer,
Construction and classification of holomorphic vertex operator algebras,
\textit{J. Reine Angew. Math.}  
published online, DOI 10.1515/crelle-2017-0046, arXiv:1507.08142.

\bibitem{FHL1993}
Igor Frenkel, Yi-Zhi Huang, and James Lepowsky, 
On axiomatic approaches to vertex operator algebras and modules, 
\textit{Memoirs Amer. Math. Soc.} \textbf{104}, no. 494, 1993.

\bibitem{Hoehn2003}
G. H\"{o}hn, 
Genera of vertex operator algebras and three dimensional topological quantum fieeld theories, 
Vertex operator algebras in mathematics and
physics (Toronto, ON 2000), 89--107, 
Fields Inst. Commun., \textbf{39} Amer. Math. Soc., Providence, RI, 2003.

\bibitem{HKL2015} 
Yi-Zhi Huang, Alexander Kirillov, Jr., and James Lepowsky, 
Braided tensor categories and extensions of vertex operator algebras, 
\textit{Comm. Math. Phys.} \textbf{337} (2015), 1143--1159.

\bibitem{KR2013}
Victor G. Kac and Ashok K. Raina, 
\textit{Bombay Lectures on Highest Weight Representations of Infinite Dimensional Lie Algebra}, 2nd. ed., 
Adv. Ser. Math. Phys., Vol. 29, World Scientific, 2013. 

\bibitem{KO2002} 
Alexander Kirillov, Jr. and Viktor Ostrik, 
On a $q$-analogue of the McKay correspondence and the ADE classification of 
$\widehat{\mathfrak{sl}}_2$ conformal field theories, 
\textit{Adv. Math.} \textbf{171} (2002), 183--227.

\bibitem{KLY2001}
Masaaki Kitazume, Ching Hung Lam, and Hiromichi Yamada, 
A class of vertex operator algebras constructed from $\Z_8$ codes, 
\textit{J. Algebra} \textbf{242} (2001), 338--359. 

\bibitem{LLY2003}
Ching Hung Lam, Ngau Lam, and Hiroshi Yamauchi,
Extension of unitary Virasoro vertex operator algebra by a simple module,
\textit{Internat. Math. Res. Notices} \textbf{11} (2003), 577--611.

\bibitem{SY2003}
Shinya Sakuma and Hiroshi Yamauchi, 
Vertex operator algebra with two Miyamoto involutions generating $S_3$.
\textit{J. Algebra} \textbf{267} (2003), 272--297. 

\bibitem{Wakimoto2001}
Minoru Wakimoto, 
\textit{Infinite-Dimensional Lie Algebras}, 
Transl. Math. Monogr., Vol. 195, Amer. Math. Soc., Providence, RI, 2001.

\bibitem{Wang1993}
Weiqiang Wang, Rationality of Virasoro vertex operator algebras, 
\textit{Duke Math. J. IMRN} \textbf{71} (1993), 197--211.

\bibitem{YY2018}
Hiromichi Yamada and Hiroshi Yamauchi, 
Simple current extensions of tensor products of vertex operator algebras, 
arXiv:1804.08242.

\bibitem{Yamauchi2004}
Hiroshi Yamauchi, 
Module categories of simple current extensions of vertex operator algebras, 
\textit{J. Pure Appl. Algebra} \textbf{189} (2004), 315--328.
\end{thebibliography}
\end{document}